\DeclareMathOperator{\Spec}{\operatorname{Spec}}
\DeclareMathOperator{\sgn}{\operatorname{sgn}}
\DeclareMathOperator{\Loc}{\operatorname{Loc}}
\DeclareMathOperator{\Hom}{\operatorname{Hom}}
\DeclareMathOperator{\Conf}{\operatorname{Conf}}
\DeclareMathOperator{\Aut}{\operatorname{Aut}}
\DeclareMathOperator{\im}{\operatorname{im}}
\newenvironment{step}[1][]{
\medskip
\emph{Step #1.}
}
{\medskip}
\theoremstyle{definition}
\newtheorem{theorem}{Theorem}[section]
\newtheorem{definition}[theorem]{Definition}
\newtheorem{lemma}[theorem]{Lemma}
\newtheorem{proposition}[theorem]{Proposition}
\title{The cluster symplectic double and \protect\\ moduli spaces of local systems}
\author{Dylan G.L. Allegretti}
\date{}
\begin{document}

\maketitle

\begin{abstract}
We prove a conjecture of Fock and Goncharov which provides a birational equivalence of a cluster variety called the cluster symplectic double and a certain moduli space of local systems associated to a surface.
\end{abstract}

\tableofcontents

\section{Introduction}

Cluster varieties are a class of geometric objects introduced by Fock and Goncharov as part of their study of higher Teichm\"uller theory~\cite{IHES, dilog}. In the original papers on the subject, Fock and Goncharov considered two types of cluster varieties. The first of these is known as the cluster $K_2$-variety or $\mathcal{A}$-variety. It is defined using the mutation formulas from Fomin and Zelevinsky's theory of cluster algebras~\cite{FZI}. The second type of cluster variety is known as the cluster Poisson variety or $\mathcal{X}$-variety. It possesses a natural Poisson structure~\cite{IHES, dilog}.

Cluster varieties appear naturally in several areas of mathematics and mathematical physics. As shown in~\cite{IHES}, these objects arise as certain moduli spaces of local systems associated to a compact oriented surface $S$ with boundary and finitely many marked points on the boundary. More precisely, Fock and Goncharov considered two moduli spaces, denoted $\mathcal{X}_{PGL_m,S}$ and $\mathcal{A}_{SL_m,S}$. The first of these spaces parametrizes $PGL_m$-local systems on the surface $S$ together with an additional datum called a framing. The second space parametrizes objects called decorated twisted $SL_m$-local systems on~$S$. Both of these moduli spaces will be defined more precisely below. One of the main results of~\cite{IHES} states that the space $\mathcal{X}_{PGL_m,S}$ is birational to a cluster Poisson variety, while $\mathcal{A}_{SL_m,S}$ is birational to a cluster $K_2$-variety.

In subsequent work, Fock and Goncharov introduced a third type of cluster variety called the cluster symplectic double. This cluster variety carries a natural symplectic form and plays a key role in the quantization of cluster Poisson varieties~\cite{dilog}.

It was shown in~\cite{double} and~\cite{A} that this object is related to certain moduli spaces of geometric structures on a doubled surface. Given a compact oriented surface $S$ with boundary, one considers the same surface~$S^\circ$ with the opposite orientation. The \emph{double} $S_{\mathcal{D}}$ of~$S$ is then defined as the surface obtained by gluing $S$ and $S^\circ$ along corresponding boundary components and deleting the image of each marked point on the boundary of~$S$ to get a punctured surface. The central object in~\cite{double} is a moduli space $\mathcal{D}_{PGL_m,S}$. It is closely related to the space of twisted $SL_m$-local systems on the double~$S_{\mathcal{D}}$, together with additional framing data. In fact, the correct definition of this space is much more subtle and involves quotienting by the action of a certain finite group. (For the precise definition, see Section~\ref{sec:ModuliSpaces} below.) In~\cite{double}, Fock and Goncharov describe a rational map 
\[
\mathcal{D}_{PGL_m,S}\dashrightarrow\mathcal{D}
\]
from the moduli space to the cluster symplectic double variety associated with a surface~$S$. The main result of the present paper is the following extension of this result of Fock and Goncharov:

\begin{theorem}
\label{thm:intromain}
The rational map above is a birational equivalence.
\end{theorem}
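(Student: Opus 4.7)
The plan is to construct an explicit rational inverse to the Fock--Goncharov map $\mathcal{D}_{PGL_m,S}\dashrightarrow\mathcal{D}$. I would fix an ideal triangulation $T$ of $S$, which simultaneously produces a cluster seed (hence a coordinate chart on $\mathcal{D}$) and a system of geometric coordinates on the moduli side, and then match the two and invert chart-by-chart.

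First I would unwrap $\mathcal{D}_{PGL_m,S}$: a generic point is represented by a framed twisted $PGL_m$-local system on the doubled surface $S_\mathcal{D}$, taken modulo the finite group action appearing in the precise definition. Restricting such a local system to the two halves $S$ and $S^\circ\subset S_\mathcal{D}$ produces two framed local systems, and the datum on $S_\mathcal{D}$ is recovered from this pair together with transition data along the seam circles coming from $\partial S$. Up to the quotient, the moduli space is therefore assembled from two copies of a framed-local-systems space on $S$, glued by seam information.

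Second I would match this picture with cluster coordinates. On the $\mathcal{D}$ side, $T$ gives a seed with vertex set $I$ and coordinates $(B_i, X_i)_{i\in I}$. On the moduli side, the results of \cite{IHES} birationally identify the space of framed $PGL_m$-local systems on $S$ with the cluster Poisson variety $\mathcal{X}$ via Fock--Goncharov snake/cross-ratio coordinates; applying this to each half furnishes the $X$-coordinates on $\mathcal{D}$. The remaining coordinates, the $B_i$, should be defined as ratios of framing flags read across the seam, using the same snake recipe but now comparing data on the $S$-side to data on the $S^\circ$-side of the gluing. A triangle-by-triangle verification shows that this assignment coincides with the restriction of the Fock--Goncharov rational map to the cluster chart associated with $T$, so inverting it on this chart furnishes a candidate rational inverse.

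The main obstacle, I expect, is ensuring that this inverse descends through the finite-group quotient in the definition of $\mathcal{D}_{PGL_m,S}$ and is compatible with mutations. For the quotient one must check that the group acts on the restriction-plus-seam description in a way that leaves the $(B_i, X_i)$ invariant, so that the coordinates in fact generate the invariant function field. For mutation one must verify that a flip of an edge of $T$ transforms the geometric coordinates by exactly the cluster symplectic-double mutation formula; this reduces to the known flip rules on each half together with a computation of how the seam ratios transform, and is again local. Once both compatibilities are established, the candidate section is a genuine rational inverse, and a dimension count upgrades the Fock--Goncharov map to a birational equivalence.
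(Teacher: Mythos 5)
Your overall strategy --- invert the Fock--Goncharov map chart-by-chart using the identification of cluster coordinates with geometric data attached to an ideal triangulation --- is the same one the paper uses. But the proposal glosses over what is actually the technical heart of the paper's argument, and some of the scaffolding you add is either unnecessary or does not do what you claim.

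The paper's Definition~\ref{def:Dspace} presents $\mathcal{D}_{PGL_m,S}$ in terms of a \emph{pair} of twisted $SL_m$-local systems $(\mathcal{L},\mathcal{L}^\circ)$ on $S$ and $S^\circ$, taken modulo the diagonal action of $\Delta_{SL_m}=\Hom(\pi_1(S),Z(SL_m))$, together with framings and an $H$-equivariant seam map $\alpha$. Your reconstruction starts (correctly) by using the $X_j$ and Theorem~\ref{thm:Xisom} to produce a framed $PGL_m$-local system on $S$, but it never addresses the passage from this $PGL_m$ datum to a \emph{twisted $SL_m$} local system, which is what the moduli space actually requires. This is precisely where the paper spends most of its effort: Lemma~\ref{lem:lifting} lifts the $PGL_m$ monodromy to $SL_m$ (possible because $\pi_1(S)$ is free), Lemma~\ref{lem:splitting} splits the central extension $1\to\mathbb{Z}/2\mathbb{Z}\to\bar\pi_1(T'S)\to\pi_1(S)\to1$ so one can manufacture a genuine twisted local system, and the second step of the proof shows that the independent choices of lift and of splitting precisely trade off against each other so that the resulting pair $(\mathcal{L},\mathcal{L}^\circ)$ is well defined modulo $\Delta_{SL_m}$. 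Your remark that ``one must check that the group acts\ldots in a way that leaves the $(B_i,X_i)$ invariant'' gestures at this but misidentifies the task: the real point is not invariance of the coordinates under the group but well-definedness of the constructed point in the quotient when the choices are varied, and that takes a concrete cocycle computation.

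Two further issues. First, the verification that a flip transforms coordinates by the cluster symplectic double mutation rule is not something you need to redo for the inverse: it is already the content of the paper's Proposition~4.1 (quoted from~\cite{double}) for the forward map, and a birational inverse on any single chart of the atlas suffices. Second, the closing ``dimension count upgrades\ldots to a birational equivalence'' is not a valid step: if you have genuinely constructed a two-sided rational inverse you already have birationality and no dimension count is needed, whereas if you only have a section a dimension count alone will not promote it. The paper instead needs Lemma~\ref{lem:pointsfield} to pass from the mutually inverse maps it has built on $\mathbb{C}$-points to an isomorphism of schemes over~$\mathbb{Q}$, which is a different kind of bookkeeping than counting dimensions. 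You should replace the dimension-count step with this field-of-definition argument and add the lifting/splitting machinery to make the inverse construction land in the right moduli space.
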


This result gives an analog for the cluster symplectic double of the results of~\cite{IHES}, which identify the cluster Poisson and $K_2$-varieties with the moduli spaces $\mathcal{X}_{PGL_m,S}$ and $\mathcal{A}_{SL_m,S}$, respectively.

The rest of this paper is organized as follows. In~Section~\ref{sec:ClusterVarieties}, we define the three types of cluster varieties following~\cite{dilog}. In~Section~\ref{sec:ModuliSpaces}, we recall the notions of framed local systems and decorated twisted local systems from~\cite{IHES}. We also recall the symplectic double moduli space from~\cite{double}. Finally, in~Section~\ref{sec:TheMainResult}, we define the above rational map and prove that it gives a birational equivalence.

In this paper, all schemes are defined over the field of rational numbers.

\section{Cluster varieties}
\label{sec:ClusterVarieties}

\subsection{Cluster $K_2$- and Poisson varieties}

We begin by reviewing the three types of cluster varieties defined in~\cite{dilog}.

\begin{definition}
A \emph{seed} $\mathbf{i}=(I,J,\varepsilon_{ij})$ consists of a finite set $I$, a subset $J\subseteq I$, and a skew-symmetric matrix $\varepsilon_{ij}$ ($i,j\in I$) with integer entries. The matrix $\varepsilon_{ij}$ is called the \emph{exchange matrix}, and the set $I-J$ is called the set of \emph{frozen elements} of $I$.
\end{definition}

Given a seed $\mathbf{i}=(I,J,\varepsilon_{ij})$, we get two split algebraic tori $\mathcal{X}_\mathbf{i} = (\mathbb{G}_m)^{|J|}$ and $\mathcal{A}_\mathbf{i} = (\mathbb{G}_m)^{|I|}$. Let $\{X_j\}$ be the natural coordinates on~$\mathcal{X}_\mathbf{i}$ and $\{A_i\}$ the natural coordinates on~$\mathcal{A}_\mathbf{i}$.

\begin{definition}
Let $\mathbf{i}=(I,J,\varepsilon_{ij})$ be a seed and $k\in J$ a non-frozen element. Then we define a new seed $\mu_k(\mathbf{i})=\mathbf{i}'=(I',J',\varepsilon_{ij}')$, called the seed obtained by \emph{mutation} in the direction~$k$ by setting $I'=I$, $J'=J$, and 
\[
\varepsilon_{ij}'=
\begin{cases}
-\varepsilon_{ij} & \mbox{if } k\in\{i,j\} \\
\varepsilon_{ij}+\frac{|\varepsilon_{ik}|\varepsilon_{kj}+\varepsilon_{ik}|\varepsilon_{kj}|}{2} & \mbox{if } k\not\in\{i,j\}.
\end{cases}
\]
\end{definition}

Two seeds will be called \emph{mutation equivalent} if they are related by a sequence of mutations. We will denote the mutation equivalence class of a seed $\mathbf{i}$ by $|\mathbf{i}|$.

A seed mutation induces birational maps on tori defined by the formulas 
\[
\mu_k^*A_i' =
\begin{cases}
A_k^{-1}\biggr(\prod_{j|\varepsilon_{kj>0}}A_j^{\varepsilon_{kj}} + \prod_{j|\varepsilon_{kj<0}}A_j^{-\varepsilon_{kj}}\biggr) & \mbox{if } i=k \\
A_i & \mbox{if } i\neq k
\end{cases}
\]
and
\[
\mu_k^*X_i'=
\begin{cases}
X_k^{-1} & \mbox{if } i=k \\
X_i{(1+X_k^{-\sgn(\varepsilon_{ik})})}^{-\varepsilon_{ik}} & \mbox{if } i\neq k
\end{cases}
\]
where $A_i'$ and $X_i'$ are the coordinates on $\mathcal{A}_{\mathbf{i}'}$ and $\mathcal{X}_{\mathbf{i}'}$. A  transformation of the $\mathcal{A}$- or $\mathcal{X}$-tori obtained by composing these birational maps is called a \emph{cluster transformation}.

\Needspace*{2\baselineskip}
\begin{definition} \mbox{}
\begin{enumerate}
\item The \emph{cluster $K_2$-variety} $\mathcal{A}=\mathcal{A}_{|\mathbf{i}|}$ is a scheme obtained by gluing the $\mathcal{A}$-tori for all seeds mutation equivalent to the seed $\mathbf{i}$ using the birational maps above.
\item The \emph{cluster Poisson variety} $\mathcal{X}=\mathcal{X}_{|\mathbf{i}|}$ is obtained by gluing the $\mathcal{X}$-tori for all seeds mutation equivalent to the seed $\mathbf{i}$ using the birational maps above. The scheme obtained in this way may be nonseparated.
\end{enumerate}
\end{definition}

There is a mapping $p:\mathcal{A}\rightarrow\mathcal{X}$ given in any cluster coordinate system by $p^*(X_i)=\prod_{j\in I}A_j^{\varepsilon_{ij}}$.

\subsection{The cluster symplectic double}
\label{sec:TheClusterSymplecticDouble}

For any seed $\mathbf{i}=(I,J,\varepsilon_{ij})$, we now consider a split algebraic torus $\mathcal{D}_\mathbf{i} = (\mathbb{G}_m)^{2|J|}$ with natural coordinates $\{B_i, X_i\}_{i\in J}$. A seed mutation induces a birational map on tori defined by 
\[
\mu_k^*X_i'=
\begin{cases}
X_k^{-1} & \mbox{if } i=k \\
X_i{(1+X_k^{-\sgn(\varepsilon_{ik})})}^{-\varepsilon_{ik}} & \mbox{if } i\neq k
\end{cases}
\]
and 
\[
\mu_k^*B_i' =
\begin{cases}
\frac{X_k\prod_{j|\varepsilon_{kj>0}}B_j^{\varepsilon_{kj}} + \prod_{j|\varepsilon_{kj<0}}B_j^{-\varepsilon_{kj}}}{(1+X_k)B_k} & \mbox{if } i=k \\
B_i & \mbox{if } i\neq k
\end{cases}
\]
where $X_i'$ and $B_i'$ are the coordinates on $\mathcal{D}_{\mathbf{i}'}$.

As before, a transformation of the $\mathcal{D}$-tori obtained by composing these birational maps is called a \emph{cluster transformation}.

\begin{definition}
The \emph{cluster symplectic double} $\mathcal{D}=\mathcal{D}_{|\mathbf{i}|}$ is a scheme obtained by gluing the $\mathcal{D}$-tori for all seeds mutation equivalent to the seed $\mathbf{i}$ using the above maps.
\end{definition}

There are various relationships between the different cluster varieties. In the following result, we will assume for simplicity that the seeds used to define $\mathcal{A}$, $\mathcal{X}$, and~$\mathcal{D}$ have no frozen elements.

\begin{theorem}[\cite{dilog}, Theorem~2.3]
\label{thm:doubleproperties}
Let $\mathbf{i}=(I,J,\varepsilon_{ij})$ be a seed with $I=J$. Then the cluster symplectic double $\mathcal{D}=\mathcal{D}_{|\mathbf{i}|}$ satisfies the following properties.
\begin{enumerate}
\item There is a map $\varphi:\mathcal{A}\times\mathcal{A}\rightarrow\mathcal{D}$ given in any cluster coordinate system by the formulas 
\begin{align*}
\varphi^*(X_i) &= \prod_j A_j^{\varepsilon_{ij}} \\
\varphi^*(B_i) &= \frac{A_i^\circ}{A_i}
\end{align*}
where $A_i^\circ$ are the coordinates on the second factor of $\mathcal{A}$.
\item There is a map $\pi:\mathcal{D}\rightarrow \mathcal{X}\times\mathcal{X}$ given in any cluster coordinate system by the formulas 
\begin{align*}
\pi^*(X_i\otimes1) &= X_i \\
\pi^*(1\otimes X_i) &= X_i \prod_{j\in J}B_j^{\varepsilon_{ij}}.
\end{align*}
\item There are commutative diagrams 
\[
\vcenter{
\xymatrix{ 
\mathcal{A}\times\mathcal{A} \ar[rd]^-{\varphi} \ar[dd]_{p\times p} \\
& \mathcal{D} \ar[ld]^-{\pi} \\
\mathcal{X}\times\mathcal{X}
}
}
\quad
\vcenter{
\xymatrix{ 
\mathcal{X} \ar[r]^{j} \ar[d] & \mathcal{D} \ar[d]^{\pi} \\
\Delta_\mathcal{X} \ar@{^(->}[r] & \mathcal{X}\times\mathcal{X}
}
}
\]
where $\Delta_\mathcal{X}$ denotes the diagonal in $\mathcal{X}\times\mathcal{X}$. Here $j:\mathcal{X}\rightarrow\mathcal{D}$ is an embedding whose image is a Lagrangian subspace given in any coordinate system by $B_i=1$~($i\in J$).
\item There is an involutive isomorphism $\iota:\mathcal{D}\rightarrow\mathcal{D}$ which interchanges the two components of the map $\pi$ and is given in any cluster coordinate system by the formulas 
\begin{align*}
\iota^*(B_i) &= B_i^{-1} \\
\iota^*(X_i) &= X_i \prod_{j\in J}B_j^{\varepsilon_{ij}}.
\end{align*}
\end{enumerate}
\end{theorem}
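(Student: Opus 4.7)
The substance of the theorem is to show that the explicit cluster-chart formulas defining $\varphi$, $\pi$, $j$, and $\iota$ glue together to give well-defined morphisms of the global cluster varieties, and then to verify the structural relations between them. Since $\mathcal{A}$, $\mathcal{X}$, and $\mathcal{D}$ are each constructed by gluing tori along the mutation birational maps recalled in Section~\ref{sec:ClusterVarieties}, well-definedness in each case reduces to a coordinate check: for every seed $\mathbf{i}$, every direction $k$, and the mutated seed $\mathbf{i}'=\mu_k(\mathbf{i})$, the relevant square of tori must commute. My plan is to perform this compatibility check map by map, and then to read off the structural statements in parts (3) and (4) as short consequences.

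For part~(1), the $X$-component of $\varphi$ is the standard map $p:\mathcal{A}\to\mathcal{X}$ applied to one factor, so its mutation compatibility is the already-known fact that $p$ descends to cluster varieties. The new content is compatibility of $\varphi^*(B_i)=A_i^\circ/A_i$ with the $\mathcal{D}$-formula for $B$. For $i\neq k$ both sides are fixed, while for $i=k$ the check is: pull $\mu_k^*(B_k')$ back through $\varphi$, substitute $\varphi^*(X_k)=\prod_j A_j^{\varepsilon_{kj}}$ into the numerator, and verify the result equals $\mu_k^*(A_k^\circ)/\mu_k^*(A_k)$. After introducing shorthand $P=\prod_{\varepsilon_{kj}>0}A_j^{\varepsilon_{kj}}$, $N=\prod_{\varepsilon_{kj}<0}A_j^{-\varepsilon_{kj}}$, and analogously $P^\circ,N^\circ$, both sides collapse to $(A_k/A_k^\circ)\cdot(P^\circ+N^\circ)/(P+N)$. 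For part~(2), the first component is tautologically compatible, and for the second one verifies that $X_i\prod_{j}B_j^{\varepsilon_{ij}}$ transforms under $\mathcal{D}$-mutation in exactly the same way as a copy of $X_i$ transforms under $\mathcal{X}$-mutation; this is a direct calculation whose success depends precisely on the form of the numerator of the $B_k'$-mutation rule.

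Part~(3) is essentially formal given parts (1) and (2). The first diagram follows by substitution: pulling $X_i\otimes 1$ and $1\otimes X_i$ back through $\pi\circ\varphi$ gives $\prod A_j^{\varepsilon_{ij}}$ and $\prod(A_j^\circ)^{\varepsilon_{ij}}$ respectively, matching $p\times p$. For $j$, well-definedness requires that the local subscheme $\{B_i=1:i\in J\}$ is preserved by $\mathcal{D}$-mutations, and indeed setting every $B_j=1$ in the formula for $\mu_k^*B_k'$ yields $(X_k+1)/(1+X_k)=1$; commutativity of the second square then drops out of substituting $B_i=1$ into the $\pi^*$-formulas. The Lagrangian claim reduces to observing that $\{B_i=1\}$ has half the dimension of $\mathcal{D}_\mathbf{i}$ and that the cluster symplectic form (which pairs $d\log B$ with $d\log X$) vanishes on the locus where all $d\log B_i=0$. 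Part~(4) consists of three short verifications: $\iota$ is an involution by direct substitution using $\iota^*\iota^*(X_i)=X_i\prod B_j^{\varepsilon_{ij}}\cdot\prod B_j^{-\varepsilon_{ij}}$; it swaps the components of $\pi$ by comparing $\iota^*\pi^*(X_i\otimes 1)$ with $\pi^*(1\otimes X_i)$; and cluster compatibility is a mutation-by-mutation computation of the same flavor as in part~(1).

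The main obstacle is the $B$-compatibility verification in part~(1). It requires aligning two applications of the $\mathcal{A}$-exchange binomial (to $A$ and to $A^\circ$) with the $\mathcal{D}$-exchange formula for $B$ and the $\mathcal{X}$-formula for $X$, all while tracking the subsets $\{j:\varepsilon_{kj}>0\}$ and $\{j:\varepsilon_{kj}<0\}$ consistently on both sides. Once this core computation is in place, the remaining compatibilities and the structural identities in (3) and (4) reduce to brief direct calculations and substitutions.
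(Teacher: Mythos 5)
The paper does not prove this theorem itself; it simply cites \cite{dilog}, Sections 2 and 3. So there is no internal argument in the paper to compare against, and you are effectively reconstructing the verification that belongs in that reference.

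That said, your outline is correct and follows the natural route: because $\mathcal{A}$, $\mathcal{X}$, $\mathcal{D}$ are each glued from tori along mutation maps, the only non-formal content in parts (1), (2), and (4) is that the chart formulas for $\varphi$, $\pi$, $\iota$ intertwine the respective mutation birational maps, seed by seed and direction by direction; parts (3) and (4)'s structural relations then really are substitution exercises. Your central computation in part (1) checks out: with $P=\prod_{\varepsilon_{kj}>0}A_j^{\varepsilon_{kj}}$, $N=\prod_{\varepsilon_{kj}<0}A_j^{-\varepsilon_{kj}}$ one has $\varphi^*(X_k)=P/N$, the numerator of $\varphi^*\bigl(\mu_k^*B_k'\bigr)$ simplifies to $(P^\circ+N^\circ)/N$, the denominator to $((P+N)/N)(A_k^\circ/A_k)$, and the quotient matches $\mu_k^*(A_k^\circ)/\mu_k^*(A_k)$. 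Your other claims likewise hold on direct verification: for $\pi$, one checks that $Y_i:=X_i\prod_j B_j^{\varepsilon_{ij}}$ obeys the $\mathcal{X}$-mutation rule using the identity $B_k^{-1}(1+X_k^{-1})^{-1}(B_k')^{-1}\prod_{\varepsilon_{kj}>0}B_j^{\varepsilon_{kj}} = (1+Y_k^{-1})^{-1}$, and for $\iota$ the analogous calculation with $\iota^*X_k=Y_k$, $\iota^*B_j=B_j^{-1}$ reproduces $\mu_k^*(B_k')^{-1}$.

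The one place where your sketch is thinner than the content it claims is the Lagrangian assertion in part (3). You appeal to a symplectic 2-form "pairing $d\log B$ with $d\log X$," which is indeed the shape of the form defined in \cite{dilog}, but neither the paper nor your proposal records that form, nor verifies that it is invariant under $\mathcal{D}$-mutations (which is what makes the term "Lagrangian subspace" well-posed globally). To make that part self-contained you would need to write down the 2-form $\Omega = \sum_i d\log B_i\wedge d\log X_i + (\text{a term in the }\varepsilon_{ij})$ from \cite{dilog}, check its mutation invariance, and then the vanishing on $\{B_i=1\}$ is immediate. As a reconstruction of the cited proof, the rest of your plan is sound and the worked $B$-compatibility is the genuine content.
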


\begin{proof} 
See \cite{dilog}, Sections 2 and 3.
\end{proof}

\section{Moduli spaces}
\label{sec:ModuliSpaces}

\subsection{Preliminaries on surfaces}

In this section, we define the three moduli spaces that play a role in our discussion. We begin by introducing some terminology related to surfaces.

\begin{definition}
A \emph{decorated surface} is a compact oriented surface with boundary together with a finite (possibly empty) collection of marked points on the boundary.
\end{definition}

Given a decorated surface $S$, we can shrink those boundary components without marked points to get a surface $S'$ with punctures and boundary where every boundary component contains at least one marked point.

\begin{definition}
Let $S$ be a decorated surface. An \emph{ideal triangulation} $T$ of $S$ is a triangulation of the surface $S'$ described in the preceding paragraph whose vertices are the marked points and the punctures.
\end{definition}

From now on, we will consider only decorated surfaces $S$ that admit an ideal triangulation. Note that in general the sides of a triangle in an ideal triangulation may not be distinct. In this case, the triangle is said to be \emph{self-folded}.

An edge of an ideal triangulation $T$ is called \emph{external} if it lies along the boundary of $S$, connecting two marked points, and is called \emph{internal} otherwise. If $k$ is an internal edge of the ideal triangulation $T$, then a \emph{flip} at $k$ is the transformation of $T$ that removes the edge $k$ and replaces it by the unique different edge that, together with the remaining edges, forms a new ideal triangulation:
\[
\xy /l1.5pc/:
{\xypolygon4"A"{~:{(2,2):}}},
{"A1"\PATH~={**@{-}}'"A3"},
\endxy
\quad
\longleftrightarrow
\quad
\xy /l1.5pc/:
{\xypolygon4"A"{~:{(2,2):}}},
{"A2"\PATH~={**@{-}}'"A4"}
\endxy
\]
A flip will be called \emph{regular} if none of the triangles above is self-folded. It is a fact that any two isotopy classes of ideal triangulations on a surface are related by a sequence of flips.

Let $m\geq2$ be an integer and consider the triangle in $\mathbb{R}^3$ defined by the equation 
\[
x+y+z=m
\]
where $x,y,z\geq0$. We can subdivide this into smaller triangles by drawing the lines $x=p$, $y=p$, and~$z=p$ where $p$  is an integer, $0\leq p\leq m$. An \emph{$m$-triangulation} of the triangle is defined to be a triangulation isotopic to this one. Given an ideal triangulation $T$ of a surface, we can draw a homeomorphic image of an $m$-triangulation within each of its triangles, matching up the vertices on the edges of~$T$. This produces a new triangulation called the \emph{$m$-triangulation} of $T$. The illustration below shows a pair of triangles and the corresponding 3-triangulation.
\[
\xy /l1.5pc/:
{\xypolygon4"A"{~:{(2,2):}}},
"A1";"A3" **\dir{-}; 
\endxy
\quad
\quad
\xy /l1.5pc/:
{\xypolygon4"A"{~:{(2,2):}}},
"A1";"A3" **\dir{-}; 
(1,-0.95)*{}="W1"; 
(1,0.95)*{}="W2"; 
(0.05,-1.85)*{}="X1"; 
(-0.9,-0.95)*{}="X2"; 
(0.05,1.85)*{}="Y1"; 
(-0.9,0.95)*{}="Y2"; 
(1.95,-1.85)*{}="U1"; 
(2.9,-0.95)*{}="U2"; 
(1.95,1.85)*{}="V1"; 
(2.9,0.95)*{}="V2"; 
"X1";"Y1" **\dir{.}; 
"X2";"Y2" **\dir{.}; 
"X1";"W1" **\dir{.}; 
"X2";"W2" **\dir{.}; 
"Y2";"W1" **\dir{.}; 
"Y1";"W2" **\dir{.}; 
"U1";"V1" **\dir{.}; 
"U2";"V2" **\dir{.}; 
"U1";"W1" **\dir{.}; 
"U2";"W2" **\dir{.}; 
"V2";"W1" **\dir{.}; 
"V1";"W2" **\dir{.}; 
\endxy
\]

The orientation of the surface $S$ provides an orientation of any triangle in the ideal triangulation~$T$. If $e$ is any edge of the $m$-triangulation that does not lie along an edge of the original ideal triangulation~$T$, then $e$ is parallel to one of the edges of $T$, and therefore it acquires an orientation.

Given an ideal triangulation $T$ of a decorated surface $S$ and an integer $m\geq2$, we define 
\[
I_m^T=\{\text{vertices of the $m$-triangulation of $T$}\} - \{\text{vertices of $T$}\}
\]
and 
\[
J_m^T=I_m^T-\{\text{vertices on the boundary of $S$}\}.
\]

\begin{definition}
Let $T$ be an ideal triangulation of~$S$ with no self-folded triangles. The \emph{exchange matrix} $\varepsilon_{ij}^T$ ($i,j\in I_m^T\times I_m^T$) is given by the formula 
\[
\varepsilon_{ij}^T=|\{\text{oriented edges from $i$ to $j$}\}| - |\{\text{oriented edges from $j$ to $i$}\}|.
\]
\end{definition}

Thus for any decorated surface with an ideal triangulation~$T$, we can define a seed with $I=I_m^T$, $J=J_m^T$, and exchange matrix $\varepsilon_{ij}=\varepsilon_{ij}^T$.

\subsection{The classical moduli spaces}

\subsubsection{Framed local systems}

Recall that the \emph{flag variety} for the group $PGL_m$ is defined as the quotient space $\mathcal{B}=PGL_m/B$ where $B$ is a Borel subgroup of $PGL_m$. Let $\mathcal{L}$ be a $PGL_m$-local system, that is, a principal $PGL_m$-bundle with flat connection. There is a natural left action of the group $PGL_m$ on the flag variety $\mathcal{B}$, and so we can form the associated bundle 
\[
\mathcal{L}_{\mathcal{B}}=\mathcal{L}\times_{PGL_m}\mathcal{B}.
\]

Let $S$ be a decorated surface. Choose points $x_1,\dots,x_r$ on its boundary, one on each segment bounded by adjacent marked points. Then the \emph{punctured boundary} of $S$ is defined as $\partial^\circ S=\partial S-\{x_1,\dots,x_r\}$. We will consider local systems on $S$ with some additional data.

\begin{definition}
Let $S$ be a decorated surface and $\mathcal{L}$ a $PGL_m$-local system on $S$. A \emph{framing} for $\mathcal{L}$ is a flat section of the restriction $\mathcal{L}_{\mathcal{B}}|_{\partial^\circ S}$ to the punctured boundary. A \emph{framed $PGL_m$-local system} is a local system $\mathcal{L}$ together with a framing. The space of all framed $PGL_m$-local systems on $S$ is denoted~$\mathcal{X}_{PGL_m,S}$.
\end{definition}

There is an alternative way of thinking about framed local systems that will be useful in what follows. Let $S$ be a decorated surface as before, and let $S'$ be the punctured surface obtained by deleting all marked points and shrinking those boundary components without marked points. Fix a complete, finite-area hyperbolic metric on this surface~$S'$ so that $\partial S'$ is totally geodesic. Then its universal cover can be identified with a subset of the hyperbolic plane~$\mathbb{H}$ with totally geodesic boundary. The punctures and deleted marked points in~$S'$ give rise to a set $\mathcal{F}_{\infty}(S)$ of points on the boundary~$\partial\mathbb{H}$. The action of $\pi_1(S)$ by deck transformations on the universal cover gives rise to an action of $\pi_1(S)$ on this set~$\mathcal{F}_{\infty}(S)$.

Recall that a $PGL_m(\mathbb{C})$-local system can be viewed as a homomorphism $\rho:\pi_1(S)\rightarrow PGL_m(\mathbb{C})$, modulo the action of $PGL_m(\mathbb{C})$ by conjugation. The following result gives a characterization of framed local systems emphasizing this monodromy representation.

\begin{proposition}[\cite{IHES}, Lemma~1.1]
\label{prop:Xconfig}
Consider a pair $(\rho,\psi)$ where $\rho:\pi_1(S)\rightarrow PGL_m(\mathbb{C})$ is a group homomorphism, and $\psi:\mathcal{F}_{\infty}(S)\rightarrow\mathcal{B}(\mathbb{C})$ is a $(\pi_1(S),\rho)$-equivariant map from the set described above into the flag variety. That is, for any $\gamma\in\pi_1(S)$, we have 
\[
\psi(\gamma c)=\rho(\gamma)\psi(c).
\]
The moduli space $\mathcal{X}_{PGL_m,S}(\mathbb{C})$ parametrizes such pairs modulo the action of the group $PGL_m(\mathbb{C})$.
\end{proposition}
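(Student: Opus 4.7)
My plan is to exhibit a two-sided correspondence, up to the action of $PGL_m(\mathbb{C})$, between framed $PGL_m$-local systems and pairs $(\rho, \psi)$ satisfying the stated equivariance. I would work on the universal cover $\widetilde{S'}$, which, using the complete hyperbolic metric on $S'$ with geodesic boundary, embeds as a convex subset of $\mathbb{H}$ whose boundary at infinity contains the set $\mathcal{F}_{\infty}(S)$. The key topological observation is that each connected component of $\partial^\circ S$ falls into one of two types: either an arc between two adjacent removed points $x_j$, $x_{j+1}$, which contains exactly one marked point of $S$; or a whole boundary component of $S$ that carries no marked points, hence corresponds to a puncture of $S'$. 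In the universal cover, a lift of the first type is an arc asymptotic at both ends to the single ideal point $c \in \mathcal{F}_{\infty}(S)$ associated to that marked point, while a lift of the second type is a horocycle converging to the ideal point $c$ corresponding to that puncture.

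Given a framed local system $(\mathcal{L}, \beta)$, I would let $\rho$ be the monodromy representation of $\mathcal{L}$, well defined up to conjugation by $PGL_m(\mathbb{C})$. Pulling back $\mathcal{L}_\mathcal{B}$ to $\widetilde{S'}$ and trivializing, the fibres are identified with $\mathcal{B}(\mathbb{C})$. The pullback of $\beta$ to any lifted component of $\partial^\circ S$ is then a flat section on a contractible set, so becomes a constant map into $\mathcal{B}(\mathbb{C})$; by the dichotomy above, each such component determines a unique point $c \in \mathcal{F}_{\infty}(S)$, and I would define $\psi(c)$ to be that constant value. The equivariance $\psi(\gamma c) = \rho(\gamma)\psi(c)$ is immediate from the way the deck transformation action on $\widetilde{S'}$ permutes the lifts while acting on the trivialization by $\rho$.

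Conversely, from a pair $(\rho, \psi)$ I would build $\mathcal{L}$ as the local system $\widetilde{S'} \times_\rho PGL_m(\mathbb{C})$ and recover $\beta$ as follows. On a lifted arc of $\partial^\circ S$ whose asymptotic ideal endpoint is $c$, I would declare the flat section to be the constant section with value $\psi(c)$; on a lifted horocyclic component converging to a puncture $c$, I would do the same. These locally defined flat sections descend to a well defined flat section of $\mathcal{L}_\mathcal{B}|_{\partial^\circ S}$ precisely by the equivariance of $\psi$. Changing the trivialization by $g \in PGL_m(\mathbb{C})$ replaces $(\rho, \psi)$ by $(g\rho g^{-1}, g \circ \psi)$, so the construction descends to a map on the $PGL_m(\mathbb{C})$-quotients, and the two constructions are then visibly mutual inverses.

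The main point requiring care, and the step I expect to be the chief obstacle, is the consistency at ideal points $c$ coming from punctures, that is, from unmarked boundary components. Here the loop in $\pi_1(S)$ going once around the corresponding boundary circle fixes $c$, so equivariance forces $\rho$ of that loop to fix the flag $\psi(c)$; this is exactly the condition needed for the constant section with value $\psi(c)$ to descend to a flat section on the closed boundary circle of $S$. Once this compatibility is established, the correspondence extends cleanly over every connected component of $\partial^\circ S$, and the verification of the bijection reduces to unwinding definitions.
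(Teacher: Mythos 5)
The paper does not prove this proposition: it is quoted from \cite{IHES} (Lemma~1.1) and used as a black box, so there is no internal argument to compare against. Your reconstruction is, however, a sound version of the standard proof: lift to the universal cover $\widetilde{S'}$, trivialize $\mathcal{L}_\mathcal{B}$ there so that a flat section over a contractible lifted piece becomes a constant flag, and match the components of $\partial^\circ S$ (one per marked point, one per unmarked boundary circle) with $\pi_1(S)$-orbits in $\mathcal{F}_\infty(S)$; equivariance of $\psi$ is then precisely the descent condition. You also correctly isolate the only nontrivial compatibility, namely that for an unmarked boundary circle the peripheral element $\delta$ fixes the corresponding ideal point $c$, so equivariance forces $\rho(\delta)\psi(c)=\psi(c)$, which is exactly what is needed for the constant section to be flat around that circle.

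One geometric detail is stated imprecisely and is worth fixing. A component of $\partial^\circ S$ containing a marked point $m$ does not lift to a single arc asymptotic at both ends to $c$: in $S'$ the point $m$ is deleted, so that component splits into two half-arcs whose lifts are two geodesic rays in $\widetilde{S'}$ approaching the ideal point $c$ from opposite sides. Because the original component in $S$ is connected and the section $\beta$ is flat across $m$, both rays carry the same constant flag, so $\psi(c)$ is still well defined and the argument goes through; but the picture should be described as two rays sharing the endpoint $c$, not one arc. The ``horocycle'' phrasing for the unmarked-boundary case is likewise only a heuristic (the circle is shrunk to a cusp in $S'$, and what matters is the peripheral loop class, not a particular horocycle), though again this does not affect the proof.
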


In~\cite{IHES}, Fock and Goncharov show how to associate, to a general point of $\mathcal{X}_{PGL_m,S}$ and ideal triangulation $T$ of $S$, a collection of coordinates $X_j$ indexed by the set $J=J_m^T$ described above. These coordinates determine a point of the torus $\mathcal{X}_{\mathbf{i}}$ where $\mathbf{i}$ is the seed corresponding to the triangulation~$T$. In fact, Fock and Goncharov prove the following result:

\begin{theorem}[\cite{IHES}, Theorem~1.17]
\label{thm:Xisom}
There is a birational map 
\[
\mathcal{X}_{PGL_m,S}\dashrightarrow\mathcal{X}_{\mathbf{i}}
\]
where $\mathbf{i}$ is the seed corresponding to an ideal triangulation $T$ of~$S$. Let $T$ and $T'$ be ideal triangulations of~$S$ related by a sequence of regular flips, and let $\mathbf{i}$ and~$\mathbf{i}'$ be the corresponding seeds. Then the transition map $\mathcal{X}_{\mathbf{i}}\dashrightarrow\mathcal{X}_{\mathbf{i}'}$ is the cluster transformation used to glue these tori in the cluster Poisson variety.
\end{theorem}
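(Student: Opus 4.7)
The plan is to identify $\mathcal{X}_{PGL_m,S}$ with the moduli of $(\pi_1(S),\rho)$-equivariant configurations of flags provided by Proposition~\ref{prop:Xconfig}, and then write down the coordinates $X_j$ as $PGL_m$-invariants of such a configuration.

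\emph{Step 1 (constructing the map).} Pass to the universal cover, where a general point of $\mathcal{X}_{PGL_m,S}(\mathbb{C})$ is a collection of flags $\psi(c)\in\mathcal{B}(\mathbb{C})$, one for each $c\in\mathcal{F}_\infty(S)$, carrying an equivariant action of $\pi_1(S)$. For a triangle $t$ of the ideal triangulation $T$ with vertices lifted to $p_1,p_2,p_3\in\mathcal{F}_\infty(S)$, the three flags $\psi(p_1),\psi(p_2),\psi(p_3)$ are generically in general position, and one associates to each vertex $j\in I_m^T$ in the interior of $t$ the classical Fock--Goncharov triple-ratio invariant at that lattice point, defined as a ratio of minors of representative matrices. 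For each internal edge of $T$ shared by two triangles, a fourth flag enters, and to each vertex $j\in J_m^T$ on that edge one assigns a cross-ratio–type invariant of the resulting quadrilateral of flags. The ambiguity in choosing matrix representatives cancels in the ratios, and equivariance guarantees independence of the chosen lift, so each $X_j$ descends to a rational function on $\mathcal{X}_{PGL_m,S}$, defining the desired map to the torus $\mathcal{X}_{\mathbf{i}}$.

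\emph{Step 2 (birationality).} I would construct a rational inverse by reconstructing a configuration of flags from its invariants. Pick a base triangle $t_0$ of $T$ and normalize its three flags to a standard reference triple using the $PGL_m$-action; the triangle invariants at the interior vertices of $t_0$ pin down the remaining gauge freedom in choosing representative vectors. For any triangle sharing an edge with one whose flags are already determined, the edge invariants determine the fourth flag uniquely. Iterating across a spanning tree of the dual graph of $T$ reconstructs all flags, and the monodromy representation $\rho$ is then read off by comparing flags at pairs of vertices of $\mathcal{F}_\infty(S)$ related by deck transformations. Comparing the number of free parameters (configurations of flags modulo $PGL_m$) with $\dim\mathcal{X}_\mathbf{i}=|J_m^T|$ confirms that this reconstruction is generically well-defined and inverse to the map of Step~1.

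\emph{Step 3 (cluster transformation under a flip).} For a regular flip at an internal edge $k$ of $T$, the edges of the $m$-triangulation change only inside the quadrilateral of the four flags $(B_1,B_2,B_3,B_4)$ meeting that edge. The new edge-coordinate $X_k'$ is constructed from the opposite diagonal of the same quadrilateral, so a direct projective identity on four flags yields $X_k' = X_k^{-1}$. For a neighboring vertex $i$ with $\varepsilon_{ik}\neq 0$, a Pl\"ucker-type three-term relation among minors built from the four flags rewrites the corresponding invariant, producing exactly the factor $(1+X_k^{-\sgn(\varepsilon_{ik})})^{-\varepsilon_{ik}}$ and matching the $\mathcal{X}$-mutation formula; vertices with $\varepsilon_{ik}=0$ are unaffected because their defining minors are unchanged by the flip.

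The hard part will be Step~3. For $m=2$ this is the classical shear-coordinate change and follows from a single cross-ratio identity, but for general $m$ one must track how the Fock--Goncharov snake moves along an edge interact with the flipped diagonal, and check that all the Pl\"ucker relations combine to give precisely the mutation exponents $\varepsilon_{ij}'$. Organizing this combinatorics so that the cluster transformation emerges uniformly, rather than case-by-case on the position of each vertex of $J_m^T$ in the quadrilateral, is the main technical challenge.
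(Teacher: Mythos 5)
This statement is not proved in the paper at all; it is quoted directly from Fock--Goncharov \cite{IHES}, Theorem~1.17, so there is no ``paper's own proof'' to compare against. What I can do is evaluate your sketch against the known structure of that result.

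Your Steps~1 and~2 correctly reproduce the shape of the Fock--Goncharov construction: pass to equivariant flag configurations via Proposition~\ref{prop:Xconfig}, assign triple ratios to interior lattice points of each triangle and edge cross-ratio–type invariants (involving four flags) to lattice points on internal edges, and invert by reconstructing flags triangle-by-triangle along a spanning tree of the dual graph. That is the right skeleton.

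Step~3 as written contains a real gap for $m\geq 3$. You assert that the coordinate attached to the flipped edge transforms by $X_k'=X_k^{-1}$ with the neighbors picking up the standard $(1+X_k^{\mp1})^{\mp\varepsilon_{ik}}$ factors, i.e.\ that a flip is a \emph{single} seed mutation in the direction of ``the'' edge vertex $k$. That is true only when $m=2$. For $m\geq 3$ an internal edge carries $m-1$ lattice points, each triangle carries $\binom{m-1}{2}$ interior lattice points, the set $J_m^T$ changes nontrivially under a flip, and the resulting transition map is a composition of $\binom{m+1}{3}$ elementary mutations in a specific order (this is a key point in Fock--Goncharov's proof). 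One cannot verify the mutation formula by a single Pl\"ucker identity on the four flags; one must exhibit the explicit mutation sequence, track the intermediate seeds, and match the resulting composite birational map against the ratio of minors. Your last paragraph gestures at ``snake moves,'' which suggests you sense the difficulty, but the sketch does not identify the correct intermediate object: the decomposition of a flip into a sequence of mutations, with a matching bijection of lattice points between the two $m$-triangulations. Without that, Step~3 as written would fail for any $m\geq 3$.
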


\subsubsection{Decorated twisted local systems}

In addition to the ordinary flag variety, we consider the \emph{decorated flag variety} of $SL_m$, which is isomorphic to the quotient $\mathcal{A}=SL_m/U$ where $U$ is a maximal unipotent subgroup of~$SL_m$. Let $\mathcal{L}$ be an $SL_m$-local system. There is a natural left action of~$SL_m$ on $\mathcal{A}$, so we can form the associated bundle 
\[
\mathcal{L}_{\mathcal{A}}=\mathcal{L}\times_{SL_m}\mathcal{A}.
\]

For a given surface $S$ as above, we can also consider local systems on the punctured tangent bundle $T'S$, that is, the tangent bundle of~$S$ with the zero section removed. For any point $y\in S$, we have $T_yS\cong\mathbb{R}^2$. Thus $T_y'S=T_yS-0$ is homotopy equivalent to a circle, and we have 
\[
\pi_1(T_y'S,x)\cong\mathbb{Z}
\]
for any choice of basepoint $x\in T_y'S$. Let $\sigma_S$ denote a generator of this fundamental group. It is well defined up to a sign. By abuse of notation, we will also write $\sigma_S$ for the image of this generator under the inclusion $\pi_1(T_y'S,x)\hookrightarrow \pi_1(T'S,x)$. The group $\pi_1(T'S,x)$ fits into a short exact sequence 
\[
\xymatrix{ 
1 \ar[r] & \mathbb{Z} \ar[r] & \pi_1(T'S,x) \ar[r] & \pi_1(S,y) \ar[r] & 1
}
\]
where the group $\mathbb{Z}$ is identified with the central subgroup of $\pi_1(T'S,x)$ generated by~$\sigma_S$.

\begin{definition}
A \emph{twisted $SL_m$-local system} $\mathcal{L}$ on $S$ is an $SL_m$-local system on the punctured tangent bundle $T'S$ with monodromy $(-1)^{m-1}e$ around $\sigma_S$ where $e$ is the identity in~$SL_m$. A \emph{decoration} for $\mathcal{L}$ is a locally constant section of the restriction of $\mathcal{L}_{\mathcal{A}}$ to the punctured boundary. A \emph{decorated twisted $SL_m$-local system} is a twisted local system $\mathcal{L}$ together together with a decoration. The space of all decorated twisted $SL_m$-local systems on $S$ is denoted~$\mathcal{A}_{SL_m,S}$.
\end{definition}

Observe that the element $(-1)^{m-1}e$ has order two, and therefore this definition does not depend on the choice of generator $\sigma_S$. The existence of a decoration implies that a decorated twisted $SL_m$-local system has unipotent monodromy around any puncture.

A twisted $SL_m$-local system on $S$ is thus the same thing as a homomorphism $\rho:\pi_1(T'S,x)\rightarrow SL_m$, considered up to conjugation, such that $\rho(\sigma_S)=(-1)^{m-1}e$. If we define $\bar{\pi}_1(T'S,x)$ to be the quotient of $\pi_1(T'S,x)$ by the central subgroup $2\mathbb{Z}$ generated by the element~$\sigma_S^2$, then we obtain a new central extension 
\[
\xymatrix{ 
1 \ar[r] & \mathbb{Z}/2\mathbb{Z} \ar[r] & \bar{\pi}_1(T'S,x) \ar[r] & \pi_1(S,y) \ar[r] & 1.
}
\]
Let $\bar{\sigma}_S$ be the order two element in the quotient group $\mathbb{Z}/2\mathbb{Z}$. Then we can think of a twisted local system as a representation $\rho:\bar{\pi}_1(T'S,x)\rightarrow SL_m$, considered modulo conjugation, with the property that $\rho(\bar{\sigma}_S)=(-1)^{m-1}e$.

In~\cite{IHES}, Fock and Goncharov show how to associate, to a general point of $\mathcal{A}_{SL_m,S}$ and ideal triangulation $T$ of $S$, a collection of coordinates $A_i$ indexed by the set $I=I_m^T$. These coordinates determine a point of the torus $\mathcal{A}_{\mathbf{i}}$ where $\mathbf{i}$ is the seed corresponding to the triangulation~$T$. In fact, Fock and Goncharov prove the following result:

\begin{theorem}[\cite{IHES}, Theorem~1.17]
\label{thm:Aisom}
There is a birational map 
\[
\mathcal{A}_{SL_m,S}\dashrightarrow\mathcal{A}_{\mathbf{i}}
\]
where $\mathbf{i}$ is the seed corresponding to an ideal triangulation $T$ of~$S$. Let $T$ and $T'$ be ideal triangulations of~$S$ related by a sequence of regular flips, and let $\mathbf{i}$ and~$\mathbf{i}'$ be the corresponding seeds. Then the transition map $\mathcal{A}_{\mathbf{i}}\dashrightarrow\mathcal{A}_{\mathbf{i}'}$ is the cluster transformation used to glue these tori in the cluster $K_2$-variety.
\end{theorem}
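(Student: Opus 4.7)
The plan is to construct the birational map explicitly by assigning a coordinate $A_i\in\mathbb{G}_m$ to each vertex $i\in I_m^T$ in terms of the decorated twisted local system, then verify that changing $T$ by a regular flip transforms the coordinates by the $\mathcal{A}$-cluster mutation. The whole argument exploits the fact that the decorated flag variety $\mathcal{A}=SL_m/U$ is affine and that its ring of regular functions on triples $\mathcal{A}^3$ is generated, on a dense open subset, by classical $SL_m$-invariant determinants of wedge products.

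First I would set up the local picture on an ideal triangle $t$ of $T$. The restriction of a decorated twisted local system $\mathcal{L}$ to a contractible lift of $t$ in the universal cover of $T'S$ is trivializable. Choosing a trivialization, parallel transport of the three decorations to a common basepoint gives a triple $(A,B,C)\in\mathcal{A}^3$ of decorated flags. Different trivializations alter this triple by a diagonal $SL_m$-action, while choosing different lifts of the vertices of $t$ to the universal cover alters each entry by the central element $(-1)^{m-1}e$. The twistedness condition $\rho(\sigma_S)=(-1)^{m-1}e$ is precisely what is needed to make this ambiguity trivial on~$\mathcal{A}$, so the triple is well defined modulo the diagonal $SL_m$-action.

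Next, following Fock--Goncharov, to each vertex $v$ of the $m$-triangulation of $t$ with barycentric coordinates $(a,b,c)$ satisfying $a+b+c=m$, I would associate the coordinate
\[
A_v \;=\; (f^1_A\wedge\cdots\wedge f^a_A)\wedge(f^1_B\wedge\cdots\wedge f^b_B)\wedge(f^1_C\wedge\cdots\wedge f^c_C),
\]
regarded as a scalar via the canonical $SL_m$-invariant volume form on $\mathbb{C}^m$; here $f^1_X,\dots,f^{m-1}_X$ denotes the sequence of vectors canonically attached to a decorated flag $X$. These functions are manifestly $SL_m$-invariant, hence depend only on the original local system. For a vertex lying on an edge of $t$, the barycentric coordinate opposite that edge vanishes, so the invariant uses only the two decorated flags at the endpoints of the edge; this shows that the coordinate computed from either of the two triangles meeting along the edge agrees, and assembles the local data into a rational map $\mathcal{A}_{SL_m,S}\dashrightarrow\mathcal{A}_{\mathbf{i}}$. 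Birationality follows from the classical invariant-theoretic fact that a generic triple of decorated flags is reconstructible from its collection of $(a,b,c)$-invariants up to the diagonal $SL_m$-action; one then reassembles the triples triangle by triangle and recovers the full twisted monodromy representation by parallel transporting along edges.

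The main obstacle is checking that a regular flip induces the $\mathcal{A}$-cluster mutation. If $T'$ differs from $T$ by flipping an internal edge $k$, only the coordinates lying in the flipped quadrilateral are affected, and the change amounts to comparing the $(a,b,c)$-invariants of four decorated flags $A,B,C,D$ glued into two triangles in one way versus the other. The required identity reduces, after unraveling the combinatorics of the $m$-triangulation, to a Plücker-type determinantal relation among wedge products of the four decorated flags. I would verify this by induction on $m$, with the base case $m=2$ being the classical Ptolemy identity, and the inductive step following from a standard straightening rule for products of minors in $SL_m$. Matching the exponents of the resulting two monomials to the combinatorial definition of $\varepsilon_{ij}^T$ as a signed count of oriented edges of the $m$-triangulation yields exactly the $\mathcal{A}$-mutation formula of Section~\ref{sec:ClusterVarieties}, completing the proof.
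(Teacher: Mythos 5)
This statement is not proved in the paper at all; it is imported verbatim from Fock--Goncharov's Theorem~1.17 in~\cite{IHES} and used as a black box (the same is true of Theorem~\ref{thm:Xisom}). So there is no in-paper argument to compare against. That said, your sketch does reproduce the shape of the Fock--Goncharov proof: determinantal $SL_m$-invariants of triples of decorated flags indexed by barycentric coordinates $(a,b,c)$ with $a+b+c=m$, compatibility along shared edges because boundary vertices of the $m$-triangulation only involve the two flags at the edge's endpoints, birationality via reconstruction of configurations from their collection of invariants, and the flip reduced to Pl\"ucker relations. This is the correct strategy, and the formula you write for $A_v$ is the right one.

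Two inaccuracies worth noting. First, the justification via the twist is off: the central element $(-1)^{m-1}e$ does \emph{not} act trivially on $\mathcal{A}=SL_m/U$ (a nontrivial central scalar is never unipotent), so the twistedness condition does not literally kill the ambiguity in the triple of decorated flags. What the twist actually does is make the system of sign ambiguities across the whole triangulated surface consistent, so that the coordinate functions $A_i$ glue globally; each flag remains defined only up to a sign, and the cancellation happens at the level of the invariants and their transition functions rather than at the level of $\mathcal{A}$ itself. The correct bookkeeping requires being more careful with lifts to $T'S$. Second, the flip is not most naturally handled by induction on $m$: in~\cite{IHES} a single flip of an edge of $T$ decomposes into a specific sequence of elementary quadrilateral moves inside the $m$-triangulation, each of which is realized by one three-term Pl\"ucker relation that is precisely the $\mathcal{A}$-mutation at the corresponding non-frozen vertex. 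You can try to organize this as an induction, but the cleaner route is the explicit decomposition into elementary moves. Neither issue is fatal for a sketch, but both would need to be repaired in a full write-up.
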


\subsection{The symplectic double moduli space}

Suppose that $S$ is a decorated surface and $S^\circ$ is the same surface equipped with the opposite orientation. The \emph{double} $S_{\mathcal{D}}$ is obtained by gluing $S$ and $S^\circ$ along corresponding boundary components and deleting the image of each marked point on the boundary of~$S$ to get a punctured surface.

Denote by $\widetilde{\Loc}_{SL_m,S}$ the space of twisted $SL_m$-local systems on the surface $S$. Suppose we are given a representation $\rho:\bar{\pi}_1(T'S)\rightarrow SL_m$ corresponding to a point in this space and a homomorphism $\sigma:\pi_1(S)\rightarrow Z(SL_m)$ into the center of~$SL_m$. Composing the latter map with the projection $\bar{\pi}_1(T'S)\rightarrow\pi_1(S)$, we get a homomorphism $\tilde{\sigma}:\bar{\pi}_1(T'S)\rightarrow SL_m$, which we can multiply pointwise with $\rho$ to get a new point $\sigma\cdot\rho\in\widetilde{\Loc}_{SL_m,S}$. Thus we have defined an action of the group 
\[
\Delta_{SL_m}=\Hom(\pi_1(S),Z(SL_m))
\]
on the space $\widetilde{\Loc}_{SL_m,S}$. The natural homeomorphism $S\rightarrow S^\circ$ induces an isomorphism $\pi_1(S)\cong\pi_1(S^\circ)$, and therefore we have a diagonal action of $\Delta_{SL_m}$ on the product space 
\[
\widetilde{\Loc}_{SL_m,S}\times\widetilde{\Loc}_{SL_m,S^\circ}.
\]

Let $\mathcal{L}$ be a twisted $SL_m$-local system on a decorated surface $S$. If $\beta$ is a framing for $\mathcal{L}$, then we define an $H$-local subsystem $\mathcal{F}_{\beta}$ over the punctured boundary of~$S$, where $H$ is the Cartan group for~$SL_m$. It is the local subsystem of $\mathcal{L}_{\mathcal{A}}$ obtained by taking the preimage of the section $\beta$ under the natural map $\mathcal{L}_{\mathcal{A}}\rightarrow\mathcal{L}_{\mathcal{B}}$.

We now give the definition of the symplectic double moduli space. As in~\cite{double}, we will assume that the surface~$S$ has no marked points on its boundary. Later we will explain how this construction can be extended to general decorated surfaces.

\begin{definition}[\cite{double}, Definition~2.3]
\label{def:Dspace}
Let $S$ be a decorated surface with no marked points. The moduli space $\mathcal{D}_{PGL_m,S}$ parametrizes the data $(\mathcal{L},\mathcal{L}^\circ,\beta,\beta^\circ,\alpha)$ where 
\begin{enumerate}
\item The pair $(\mathcal{L},\mathcal{L}^\circ)$ is an element of 
\[
\left(\widetilde{\Loc}_{SL_m,S}\times\widetilde{\Loc}_{SL_m,S^\circ}\right)/\Delta_{SL_m}.
\]

\item $\beta$ and $\beta^\circ$ are framings for the $PGL_m$-local systems on $S$ and $S^\circ$ corresponding to $\mathcal{L}$ and $\mathcal{L}^\circ$, respectively.

\item $\alpha$ is an $H$-equivariant map $\mathcal{F}_{\beta}\rightarrow\mathcal{F}_{\beta^\circ}$ of local subsystems. (So in particular these local subsystems are isomorphic.)
\end{enumerate}
\end{definition}

In~\cite{double}, Fock and Goncharov showed that this space is closely related to the geometry of the doubled surface $S_\mathcal{D}$. In~\cite{A}, the author studied versions of the Teichm\"uller space and space of measured laminations on~$S_\mathcal{D}$ which are closely related to the above construction.

Definition~\ref{def:Dspace} describes the moduli space $\mathcal{D}_{PGL_m,S}$ when there are no marked points on the boundary of~$S$. To define this space for a general decorated surface, we must modify the definition slightly. As before, we consider tuples $(\mathcal{L},\mathcal{L}^\circ,\beta,\beta^\circ,\alpha)$. If we choose a decorated flag in the fiber of $\mathcal{F}_\beta$ over each marked point on $\partial S$, then the map $\alpha$ gives a corresponding choice of decorated flags in the fibers of $\mathcal{F}_{\beta^\circ}$. For a generic choice of flags, Fock and Goncharov's construction provides coordinates $A_i$ and~$A_i^\circ$~($i\in I_m^T-J_m^T$) corresponding to these decorations of~$\mathcal{L}$ and~$\mathcal{L}^\circ$, respectively. They are independent of the ideal triangulation~$T$.

\begin{definition}
\label{def:Dspacemarked}
For any decorated surface $S$, the space $\mathcal{D}_{PGL_m,S}$ parametrizes the data $(\mathcal{L},\mathcal{L}^\circ,\beta,\beta^\circ,\alpha)$ as above where we require $A_i=A_i^\circ$ for $i\in I_m^T-J_m^T$ and any choice of decorated flags.
\end{definition}

Below we will see that this definition is a natural one as it allows us to prove our main result for arbitrary decorated surfaces.

\section{The main result}
\label{sec:TheMainResult}

\subsection{Construction of coordinates}

To construct coordinates on the moduli space $\mathcal{D}_{PGL_m,S}$ of the previous section, fix an ideal triangulation~$T$ of the surface~$S$ and a general point $\mu\in\mathcal{D}_{PGL_m,S}$. This point $\mu$ determines a framed local system on~$S$. By Theorem~\ref{thm:Xisom}, there is a collection of $X_j$, indexed by the set $J=J_m^T$, which are coordinates of this framed local system.

In addition to these coordinates $X_j$ ($j\in J$), we will define a collection of coordinates $B_j$~($j\in J$) as follows. Let $t$ be any triangle in the ideal triangulation~$T$. Then the data defining the point $m\in\mathcal{D}_{PGL_m,S}$ allow us to assign, to each vertex~$p$ of this triangle, an invariant flag $b_p$. For each vertex $p$, we can then choose a decorated flag~$a_p$ in the fiber of the projection $\mathcal{A}\rightarrow\mathcal{B}$ over the flag $b_p$. By parallel transporting these decorated flags to a common point in the interior of $t$, we get a well defined point in the configuration space 
\[
\Conf_3(\mathcal{A})=SL_m\backslash\mathcal{A}^3
\]
of triples of decorated flags. On the other hand, consider the ideal triangulation~$T^\circ$ of~$S^\circ$ corresponding to $T$, and let $t^\circ$ be the triangle in this triangulation corresponding to $t$. We have associated a decorated flag $a_p$ to each vertex $p$, and the choice of $\mu$ allows us to associate a decorated flag $a_p^\circ$ to the corresponding vertex of $t^\circ$. We can once again transport these decorated flags to a common point in the interior of $t^\circ$. In this way, we obtain a second point of $\Conf_3(\mathcal{A})$. This construction produces a well defined point in 
\[
\left(\Conf_3(\mathcal{A})\times\Conf_3(\mathcal{A})\right)/H^3
\]
where $H$ denotes the Cartan group of $SL_m$.

By the results of~\cite{IHES}, the first factor $\Conf_3(\mathcal{A})$ is identified with the space of decorated twisted local systems on $t$, and by Theorem~\ref{thm:Aisom}, there is a set of coordinates $A_i$ on this space, parametrized by a set of vertices of the $m$-triangulation of~$t$. Similarly, there are coordinates~$A_i^\circ$ on the second factor. If the surface $S$ has marked points, then we have $A_i=A_i^\circ$ for $i\in I_m^T-J_m^T$ by Definition~\ref{def:Dspacemarked}. For each index $j\in J_m^T$, we define 
\[
B_j=\frac{A_j^\circ}{A_j}.
\]
One can show that this ratio is independent of all choices in the construction. Thus we have associated a well defined $B_j$ to each index $j$ in the set $J_m^T$.

\begin{proposition}[\cite{double}]
This construction defines a rational map 
\[
\mathcal{D}_{PGL_m,S}\dashrightarrow\mathcal{D}_{\mathbf{i}}
\]
where $\mathbf{i}$ is the seed corresponding to an ideal triangulation $T$ of~$S$ and $\mathcal{D}_{\mathbf{i}}$ is the torus from Section~\ref{sec:TheClusterSymplecticDouble}. Let $T$ and $T'$ be ideal triangulations of~$S$ related by a sequence of regular flips, and let $\mathbf{i}$ and~$\mathbf{i}'$ be the corresponding seeds. Then the transition map $\mathcal{D}_{\mathbf{i}}\dashrightarrow\mathcal{D}_{\mathbf{i}'}$ is the cluster transformation used to glue these tori in the cluster symplectic double.
\end{proposition}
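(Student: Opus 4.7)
The plan is to verify the two assertions in turn: first, that the recipe $\mu\mapsto(X_j,B_j)$ produces a well-defined point of the torus $\mathcal{D}_{\mathbf{i}}$, and second, that under a regular flip the resulting coordinates transform by the mutation rules of Section~\ref{sec:TheClusterSymplecticDouble}.

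For well-definedness, the $X_j$ are supplied by Theorem~\ref{thm:Xisom} applied to the framed $PGL_m$-local system $(\mathcal{L},\beta)$ underlying $\mu$, so only the $B_j$ require argument. The only choice in their construction is the lift of each invariant flag $b_p$ at a vertex $p$ of a triangle $t$ to a decorated flag $a_p$. Changing $a_p$ by an element $h\in H$ in its fiber multiplies the Fock--Goncharov $\mathcal{A}$-coordinate $A_i$ at each vertex $i$ of the $m$-triangulation of $t$ by a character of $H$ depending only on the combinatorial position of $i$. The $H$-equivariance of the gluing map $\alpha:\mathcal{F}_\beta\rightarrow\mathcal{F}_{\beta^\circ}$ required by Definition~\ref{def:Dspace} forces the paired decorated flag $a_p^\circ$ on the $S^\circ$-side to transform by exactly the same $h$, so $A_i^\circ$ is multiplied by exactly the same character. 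Hence the ratio $B_j=A_j^\circ/A_j$ is independent of all such choices. For a vertex $j\in I_m^T-J_m^T$ lying on $\partial S$, the matching condition $A_j=A_j^\circ$ imposed by Definition~\ref{def:Dspacemarked} gives $B_j=1$; such indices are frozen and do not enter $\mathcal{D}_{\mathbf{i}}$, so the recipe is a well-defined rational map to $\mathcal{D}_{\mathbf{i}}$.

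Next I would verify the flip rule. Since the $X_j$ depend only on $(\mathcal{L},\beta)$, Theorem~\ref{thm:Xisom} already gives the $\mathcal{X}$-mutation formula, which coincides with the $\mathcal{D}$-formula for $X_i'$. For the $B$-coordinates, Theorem~\ref{thm:Aisom} applies separately to $(\mathcal{L},\beta)$ and $(\mathcal{L}^\circ,\beta^\circ)$, giving the standard $\mathcal{A}$-mutation formulas for $A_i'$ and $A_i^{\circ\prime}$. For $i\neq k$ one has $A_i'=A_i$ and $A_i^{\circ\prime}=A_i^\circ$, hence $B_i'=B_i$. For $i=k$, set $M^{\pm}=\prod_{j:\,\pm\varepsilon_{kj}>0}A_j^{\pm\varepsilon_{kj}}$ and similarly for the circled coordinates; then $A_k^{\circ\prime}/A_k'=(A_k/A_k^\circ)\cdot(M^{\circ+}+M^{\circ-})/(M^++M^-)$. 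Dividing numerator and denominator of the second factor by $M^-$, using $X_k=M^+/M^-=\prod_j A_j^{\varepsilon_{kj}}$ from the map $p:\mathcal{A}\rightarrow\mathcal{X}$, and collecting the factors $A_j^\circ/A_j$ into $B_j$, produces exactly the formula for $B_k'$ stated in Section~\ref{sec:TheClusterSymplecticDouble}.

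The main obstacle is the well-definedness step: it is essential that the Fock--Goncharov construction multiplies $A_i$ and $A_i^\circ$ by the \emph{same} character under the $H$-action, which is precisely what the $H$-equivariance in Definition~\ref{def:Dspace} and the boundary condition in Definition~\ref{def:Dspacemarked} supply. Once this is in place, the flip rule is the algebraic manipulation sketched above, combining Theorems~\ref{thm:Xisom} and~\ref{thm:Aisom} with $B_j=A_j^\circ/A_j$.
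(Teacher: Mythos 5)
The paper itself gives no proof of this proposition; it simply cites Fock and Goncharov's paper \cite{double} (after which the text moves directly to the proof of the main theorem). So you are supplying an argument that the paper leaves to its reference, and it is worth assessing on its own terms rather than against a parallel in the text.

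Your argument is essentially sound. The $H$-equivariance observation is exactly the mechanism that makes the ratios $B_j=A_j^\circ/A_j$ well defined, and the reduction of the flip formula for $B_k'$ to the $\mathcal{A}$-mutation formulas via $X_k=\prod_j A_j^{\varepsilon_{kj}}$ (which is the map $p$ of the paper, realized geometrically by Fock--Goncharov's result that forgetting the decoration corresponds to $p:\mathcal{A}\to\mathcal{X}$) is the correct computation; I checked that dividing by $M^-$ does reproduce the formula for $\mu_k^*B_k'$ stated in Section~\ref{sec:TheClusterSymplecticDouble}, with the frozen $B_j=1$ ensuring the $\mathcal{A}$-products over $I$ and the $\mathcal{D}$-products over $J$ agree. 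Two small points deserve more care in a full write-up. First, when you say $A_i^\circ$ is multiplied ``by exactly the same character,'' there is an orientation reversal on the $S^\circ$ side: $t^\circ$ carries the opposite orientation, so the $\mathcal{A}$-coordinates are computed on the reversed triple of decorated flags. The transformation of the $\Delta$-invariants under $H^3$ is nonetheless symmetric in a way that makes the two characters match, but this should be said explicitly rather than assumed. Second, you treat the lift of each $b_p$ to a decorated flag as ``the only choice''; in fact one also chooses a representative of the $\Delta_{SL_m}$-orbit of $(\mathcal{L},\mathcal{L}^\circ)$ and a representative monodromy homomorphism within an $SL_m$-conjugacy class, and these need to be noted (even if their invariance is routine) to fully justify that the $B_j$ descend to the quotient appearing in Definition~\ref{def:Dspace}.
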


\subsection{Proof of the main theorem}

To prove our main result, we will need the following elementary facts.

\begin{lemma}
\label{lem:splitting}
Let 
\[
\xymatrix{ 
1 \ar[r] & A \ar[r]^{u} & B \ar[r]^{v} & C \ar[r] & 1
}
\]
be a central extension of groups with $\Aut(A)=1$, and let $s:C\rightarrow B$ be a homomorphism such that $v\circ s=1_C$. Then there is a natural isomorphism $A\times C\cong B$, $(a,c)\mapsto u(a)s(c)$. If $s':C\rightarrow B$ is any other homomorphism such that $v\circ s'=1_C$, then there exists $\varphi\in\Hom(C,A)$ such that 
\[
s'(c)=u(\varphi(c))s(c)
\]
for all $c\in C$.
\end{lemma}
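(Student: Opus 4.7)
The plan is to treat the two assertions separately; both follow directly from the centrality of the extension, which guarantees that $u(A) \subseteq Z(B)$ (and in particular $A$ is abelian so that $A\times C$ is a group).

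For the first assertion, I would verify that $f:A\times C\to B$, $(a,c)\mapsto u(a)s(c)$, is a group homomorphism, using that $u(a')$ commutes with $s(c)$ to get
\[
f(a,c)\,f(a',c') = u(a)s(c)u(a')s(c') = u(aa')s(cc') = f(aa',cc').
\]
Surjectivity: given $b\in B$, set $c = v(b)$; then $v(b\,s(c)^{-1}) = 1_C$, so $b\,s(c)^{-1} = u(a)$ for a unique $a\in A$ by exactness and injectivity of $u$. Injectivity: if $u(a)s(c) = 1_B$, applying $v$ forces $c = 1_C$, hence $u(a) = 1_B$, hence $a=1_A$.

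For the second assertion, given another section $s'$, observe that for every $c\in C$ we have $v(s'(c)s(c)^{-1}) = c\cdot c^{-1} = 1_C$, so by exactness there is a unique $\varphi(c)\in A$ with $u(\varphi(c)) = s'(c)s(c)^{-1}$. To check $\varphi\in\Hom(C,A)$, expand
\[
u(\varphi(cc')) = s'(c)s'(c')s(c')^{-1}s(c)^{-1}
\]
and use centrality of $u(A)$ to slide $u(\varphi(c')) = s'(c')s(c')^{-1}$ past $s(c)^{-1}$, yielding $u(\varphi(c))u(\varphi(c')) = u(\varphi(c)\varphi(c'))$; injectivity of $u$ then gives $\varphi(cc') = \varphi(c)\varphi(c')$.

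Honestly, there is no real obstacle here: the lemma is a bookkeeping exercise whose only conceptual input is that a central extension makes $u(A)$ commute with all of $B$. The hypothesis $\Aut(A) = 1$ plays no role in the argument above and is presumably recorded only because it is a feature of the specific extensions (with $A = \mathbb{Z}/2\mathbb{Z}$) to which the lemma will be applied in the sequel.
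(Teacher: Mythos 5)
Your proof is correct, and the second half (existence and homomorphism property of $\varphi$) is essentially identical to the paper's argument: define $\varphi(c)$ by $u(\varphi(c)) = s'(c)s(c)^{-1}$, then slide a central factor past $s(c)^{-1}$ to verify multiplicativity.

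For the first half you take a slightly different route. The paper invokes the standard fact that a split extension $1\to A\to B\to C\to 1$ with section $s$ yields $B\cong A\rtimes C$, and then uses the hypothesis $\Aut(A)=1$ to collapse the semidirect product to a direct product (since any action of $C$ on $A$ must be trivial). You instead verify directly that $(a,c)\mapsto u(a)s(c)$ is a bijective homomorphism, using centrality of $u(A)$ to commute the factors. Your observation that $\Aut(A)=1$ is not actually needed is correct: in a central extension, the conjugation action $c\cdot a = s(c)u(a)s(c)^{-1}$ defining the semidirect product is already trivial because $u(A)\subseteq Z(B)$, so the semidirect product is automatically direct. Thus your argument is marginally more economical, dropping a redundant hypothesis; the paper's version reaches the same conclusion via the semidirect-product formalism, with $\Aut(A)=1$ serving as an alternative (and, here, superfluous given centrality) reason for triviality of the action. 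Both are valid, and the difference is cosmetic since the application has $A=\mathbb{Z}/2\mathbb{Z}$, for which both conditions hold.
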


\begin{proof}
It is well known that if $s:C\rightarrow B$ is a homomorphism satisfying $v\circ s=1_C$, then the rule $(a,c)\mapsto u(a)s(c)$ defines an isomorphism of a semidirect product $A\rtimes C$ with~$B$. Since $\Aut(A)=1$, the semidirect product is isomorphic to the direct product $A\times C$.

If $s$ and $s'$ are homomorphisms $C\rightarrow B$ such that $v\circ s=v\circ s'=1_C$, then we have 
\[
v(s'(c)s(c)^{-1})=1
\]
for all $c\in C$. By exactness, we have $s'(c)s(c)^{-1}\in\ker v=\im u$, and therefore there exists $\varphi(c)\in A$ such that $u(\varphi(c))=s'(c)s(c)^{-1}$, that is,
\[
s'(c)=u(\varphi(c))s(c)
\]
for all $c\in C$. We claim that the map $\varphi:C\rightarrow A$ defined in this way is a group homomorphism. Indeed, suppose $c_1$,~$c_2\in C$. Since the image of $u$ is central in $B$, we have 
\begin{align*}
u(\varphi(c_1c_2)) &= s'(c_1c_2)s(c_1c_2)^{-1} \\
&= s'(c_1)s'(c_2)s(c_2)^{-1}s(c_1)^{-1} \\
&= s'(c_1)u(\varphi(c_2))s(c_1)^{-1} \\
&= s'(c_1)s(c_1)^{-1}u(\varphi(c_2)) \\
&= u(\varphi(c_1))u(\varphi(c_2)).
\end{align*}
Since $u$ is injective, we can cancel $u$ on both sides of $u(\varphi(c_1c_2))=u(\varphi(c_1)\varphi(c_2))$ to see that $\varphi$ is a homomorphism as claimed. This completes the proof.
\end{proof}

\begin{lemma}
\label{lem:lifting}
Let $S$ be a decorated surface with at least one boundary component. Any homomorphism $\pi_1(S)\rightarrow PGL_m(\mathbb{C})$ can be lifted to a homomorphism $\pi_1(S)\rightarrow SL_m(\mathbb{C})$. Any two lifts are related by the action of an element of the group 
\[
\Delta_{SL_m}=\Hom(\pi_1(S),Z(SL_m(\mathbb{C}))).
\]
\end{lemma}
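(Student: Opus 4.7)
The plan is to exploit the fact that the fundamental group of a compact surface with boundary is free. Since $S$ is a decorated surface with at least one boundary component, $S$ deformation retracts onto a one-dimensional CW-complex (a wedge of circles), so $\pi_1(S)$ is a free group on some finite set of generators $g_1,\dots,g_n$. Marked points on the boundary do not affect the fundamental group, so this holds uniformly.

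For the first assertion, I would use the fact that the projection $SL_m(\mathbb{C})\to PGL_m(\mathbb{C})$ is surjective on $\mathbb{C}$-points. Given a homomorphism $\rho:\pi_1(S)\to PGL_m(\mathbb{C})$, I pick, for each free generator $g_i$, an arbitrary lift $\tilde{g}_i\in SL_m(\mathbb{C})$ of $\rho(g_i)$. By the universal property of the free group, the assignment $g_i\mapsto \tilde{g}_i$ extends uniquely to a homomorphism $\tilde{\rho}:\pi_1(S)\to SL_m(\mathbb{C})$, and by construction its composition with the projection to $PGL_m(\mathbb{C})$ agrees with $\rho$ on generators, hence agrees with $\rho$ globally.

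For the second assertion, suppose $\tilde{\rho}$ and $\tilde{\rho}'$ are two such lifts. For each $\gamma\in\pi_1(S)$, the element
\[
\sigma(\gamma):=\tilde{\rho}'(\gamma)\tilde{\rho}(\gamma)^{-1}
\]
lies in the kernel of $SL_m(\mathbb{C})\to PGL_m(\mathbb{C})$, which is precisely $Z(SL_m(\mathbb{C}))$. I then need to verify that the map $\sigma:\pi_1(S)\to Z(SL_m(\mathbb{C}))$ is a homomorphism. For $\gamma_1,\gamma_2\in\pi_1(S)$,
\[
\sigma(\gamma_1\gamma_2)=\tilde{\rho}'(\gamma_1)\tilde{\rho}'(\gamma_2)\tilde{\rho}(\gamma_2)^{-1}\tilde{\rho}(\gamma_1)^{-1}=\tilde{\rho}'(\gamma_1)\sigma(\gamma_2)\tilde{\rho}(\gamma_1)^{-1},
\]
and since $\sigma(\gamma_2)$ is central this equals $\sigma(\gamma_1)\sigma(\gamma_2)$. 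Thus $\sigma\in\Delta_{SL_m}$, and $\tilde{\rho}'=\sigma\cdot\tilde{\rho}$ in the sense defined earlier.

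No step here is genuinely hard; the only subtle ingredient is invoking freeness of $\pi_1(S)$, which is where the hypothesis of a nonempty boundary enters. The rest is a standard diagram-chase using centrality of $\ker(SL_m\to PGL_m)$, parallel to the computation in the second half of Lemma~\ref{lem:splitting}.
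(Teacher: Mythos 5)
Your proof is correct and takes essentially the same approach as the paper's: use freeness of $\pi_1(S)$ to lift generator by generator, then observe that the ratio of two lifts lands in $Z(SL_m(\mathbb{C}))$ and is a homomorphism by centrality. The only cosmetic difference is that you identify $Z(SL_m(\mathbb{C}))$ as the kernel of $SL_m\to PGL_m$ directly, while the paper writes $\rho_2(\gamma)=\lambda_\gamma\rho_1(\gamma)$ and identifies $\lambda_\gamma$ with an $m$th root of unity by taking determinants; these are the same observation.
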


\begin{proof}
Let $\rho:\pi_1(S)\rightarrow PGL_m(\mathbb{C})$ be a homomorphism. Since $\pi_1(S)$ is free and $\mathbb{C}$ is algebraically closed, we can lift this to a homomorphism $\pi_1(S)\rightarrow SL_m(\mathbb{C})$ by choosing a lift for each generator of $\pi_1(S)$.

Suppose $\rho_1$,~$\rho_2:\pi_1(S)\rightarrow SL_m(\mathbb{C})$ are lifts of $\rho$. Then for every $\gamma\in\pi_1(S)$, the elements $\rho_1(\gamma)$ and $\rho_2(\gamma)$ represent the same class in $PGL_m(\mathbb{C})$, so there exists a nonzero scalar $\lambda_{\gamma}$ such that 
\[
\rho_2(\gamma)=\lambda_{\gamma}\rho_1(\gamma).
\]
Taking determinants of both sides, we see that 
\[
1=\lambda_{\gamma}^m.
\]
The group $Z(SL_m(\mathbb{C}))$ is identified with the group of $m$th roots of unity, so we can define a map $\sigma:\pi_1(S)\rightarrow Z(SL_m(\mathbb{C}))$ by putting $\sigma(\gamma)=\lambda_{\gamma}$. One easily checks that this map is a homomorphism. By construction, we have $\rho_2=\sigma\cdot\rho_1$.
\end{proof}

Let $A=\mathbb{Q}[x_1,\dots,x_k]/(f_1,\dots,f_s)$ and $B=\mathbb{Q}[x_1,\dots,x_l]/(g_1,\dots,g_t)$ be reduced algebras over~$\mathbb{Q}$, and let $X=\Spec A$ and $Y=\Spec B$ be the corresponding affine schemes over~$\mathbb{Q}$.

\begin{lemma}
\label{lem:pointsfield}
Let $\varphi^*:B\rightarrow A$ and $\psi^*:A\rightarrow B$ be ring homomorphisms such that the induced maps $\varphi:X(\mathbb{C})\rightarrow Y(\mathbb{C})$ and $\psi:Y(\mathbb{C})\rightarrow X(\mathbb{C})$ satisfy $\varphi\circ\psi=1_{Y(\mathbb{C})}$ and $\psi\circ\varphi=1_{X(\mathbb{C})}$. Then $X$ and $Y$ are isomorphic as schemes over~$\mathbb{Q}$.
\end{lemma}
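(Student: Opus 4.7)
The plan is to prove the sharper statement: if $h_1,h_2:R\to S$ are two $\mathbb{Q}$-algebra homomorphisms between reduced finitely generated $\mathbb{Q}$-algebras that induce the same map on $\mathbb{C}$-points, then $h_1=h_2$ as ring homomorphisms. Once this is established, apply it to $h_1=\psi^*\circ\varphi^*:B\to B$ and $h_2=1_B$, and symmetrically to $\varphi^*\circ\psi^*:A\to A$ and $1_A$. This will show that $\varphi^*$ and $\psi^*$ are mutually inverse ring homomorphisms, yielding the required isomorphism $X\cong Y$ of schemes over~$\mathbb{Q}$.

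To prove the sharper statement, fix $f\in R$ and let $d=h_1(f)-h_2(f)\in S$. I would show that $d$ is killed by every $\mathbb{C}$-point of $\Spec S$ and then that this forces $d=0$. For the first step, take any maximal ideal $\mathfrak{m}\subset S$; by the Nullstellensatz in its general form, applied to the finitely generated $\mathbb{Q}$-algebra $S$, the residue field $S/\mathfrak{m}$ is a finite extension of $\mathbb{Q}$. Choosing any embedding $S/\mathfrak{m}\hookrightarrow\mathbb{C}$ produces a $\mathbb{C}$-point $p:S\to\mathbb{C}$ of~$Y$. The hypothesis $p\circ h_1=p\circ h_2$ yields $p(d)=0$, so $d\in\mathfrak{m}$.

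Since $\mathfrak{m}$ was arbitrary, $d$ lies in the Jacobson radical of $S$. Now use that $S$, being a finitely generated algebra over a field, is a Jacobson ring: its Jacobson radical coincides with its nilradical. Since $S$ is reduced, the nilradical vanishes, hence $d=0$. Therefore $h_1(f)=h_2(f)$ for every $f\in R$, and we conclude $h_1=h_2$.

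The main obstacle, such as it is, is simply the translation between geometric equality (on $\mathbb{C}$-points) and algebraic equality (of ring maps); this is exactly where the hypotheses on $A$ and $B$ are used. The finite-generation assumption over $\mathbb{Q}$ is needed so that maximal ideals have residue fields embeddable in~$\mathbb{C}$ and so that $S$ is a Jacobson ring, while the reducedness assumption is needed to pass from $d$ being nilpotent to $d$ being zero. No tools beyond the Nullstellensatz and the Jacobson-ring property of finitely generated algebras over a field are required.
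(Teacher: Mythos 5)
Your proof is correct, but it takes a genuinely different route from the paper's. The paper base-changes to $\mathbb{C}$: it observes that $\varphi$ and $\psi$ give mutually inverse maps of the affine $\mathbb{C}$-varieties $X(\mathbb{C})$ and $Y(\mathbb{C})$, invokes the classical Nullstellensatz over the algebraically closed field $\mathbb{C}$ (together with the fact, used implicitly, that a reduced finitely generated algebra over the perfect field $\mathbb{Q}$ remains reduced after $-\otimes_{\mathbb{Q}}\mathbb{C}$) to get an isomorphism of $\mathbb{C}$-coordinate rings, and then notes that this isomorphism is the base change of $\psi^*$ and hence that $\psi^*$ and $\varphi^*$ are already inverse over $\mathbb{Q}$. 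You instead isolate and prove a ``separation'' lemma directly over $\mathbb{Q}$: two $\mathbb{Q}$-algebra maps between reduced finitely generated $\mathbb{Q}$-algebras that agree on all $\mathbb{C}$-points are equal, by observing that the difference lies in every maximal ideal (residue fields are number fields by Zariski's lemma, so every maximal ideal yields a $\mathbb{C}$-point) and hence in the Jacobson radical, which vanishes since finitely generated algebras over a field are Jacobson rings and $S$ is reduced. Applying this to $\psi^*\circ\varphi^*$ vs.\ $1_B$ and to $\varphi^*\circ\psi^*$ vs.\ $1_A$ then shows $\varphi^*$, $\psi^*$ are mutually inverse. Your route has the advantage of never leaving the base field $\mathbb{Q}$ and of making explicit the exact ingredient (the Jacobson property plus reducedness) that translates equality on $\mathbb{C}$-points into equality of ring maps -- a point the paper's proof glosses over when it asserts that the $\mathbb{C}$-algebra isomorphism ``restricts to'' $\psi^*$. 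The paper's route is shorter if one is willing to take the base-change and descent steps on faith.
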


\begin{proof}
We can think of the $X(\mathbb{C})$ as the closed subset of $\mathbb{C}^k$ defined by the polynomials $f_1,\dots,f_s\in\mathbb{C}[x_1,\dots,x_k]$ and think of $Y(\mathbb{C})$ as the closed subset of $\mathbb{C}^l$ defined by $g_1,\dots,g_t\in\mathbb{C}[x_1,\dots,x_l]$. The maps $\psi$ and $\varphi$ provide inverse isomorphisms of these closed subsets. Since we assume that $A$ and $B$ are reduced, the Nullstellensatz implies that there is an isomorphism of coordinate rings  
\[
\mathbb{C}[x_1,\dots,x_k]/(f_1,\dots,f_s)\stackrel{\cong}{\rightarrow}\mathbb{C}[x_1,\dots,x_l]/(g_1,\dots,g_t).
\]
This isomorphism restricts to the map $\psi^*:A\rightarrow B$, and its inverse restricts to $\varphi^*:B\rightarrow A$. It follows that $A\cong B$ as rings, and therefore $X$ and $Y$ are isomorphic as schemes over~$\mathbb{Q}$.
\end{proof}

The following theorem is the main result of this section.

\begin{theorem}
\label{thm:main}
Fix an ideal triangulation $T$ of the surface $S$, and let $\mathbf{i}$ be the corresponding seed. There is a rational map 
\[
\mathcal{D}_{\mathbf{i}}\dashrightarrow\mathcal{D}_{PGL_m,S}
\]
from the split algebraic torus $\mathcal{D}_{\mathbf{i}}=\mathbb{G}_m^{2|J|}$ into the symplectic double moduli space. Composing this map with the coordinate functions gives the identity whenever the composition is defined.
\end{theorem}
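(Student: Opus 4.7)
The plan is to construct the inverse of the coordinate map defined in the preceding proposition. Given a general $\mathbb{C}$-point of the torus $\mathcal{D}_{\mathbf{i}}$ with coordinates $(X_j, B_j)_{j\in J}$, I will produce a quintuple $(\mathcal{L}, \mathcal{L}^\circ, \beta, \beta^\circ, \alpha)$ representing a point of $\mathcal{D}_{PGL_m, S}$. The first step is to apply the projection $\pi$ from Theorem~\ref{thm:doubleproperties}(2), which packages the point of $\mathcal{D}_{\mathbf{i}}$ into two $\mathcal{X}$-coordinate tuples: the $X_j$ themselves and $X_j \prod_k B_k^{\varepsilon_{jk}}$. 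Inverting the birational map of Theorem~\ref{thm:Xisom}, each tuple determines a framed $PGL_m$-local system, namely $(\mathcal{L}^{PGL}, \beta)$ on $S$ and $(\mathcal{L}^{PGL, \circ}, \beta^\circ)$ on $S^\circ$.

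The next step is to lift these framed $PGL_m$-local systems to twisted $SL_m$-local systems $\mathcal{L}$ and $\mathcal{L}^\circ$. This is where Lemma~\ref{lem:lifting} enters: since $S$ has boundary, $\pi_1(S)$ is free, so a lift of the monodromy representation to $SL_m(\mathbb{C})$ exists and is unique up to the action of $\Delta_{SL_m}$. Together with a splitting of the central extension $1\to\mathbb{Z}/2\to\bar{\pi}_1(T'S)\to\pi_1(S)\to 1$, which exists by Lemma~\ref{lem:splitting} applied to this sequence, one obtains well-defined twisted local systems $\mathcal{L}$ and $\mathcal{L}^\circ$. The ambiguity in these lifts is exactly the diagonal $\Delta_{SL_m}$-action that is quotiented out in Definition~\ref{def:Dspace}, so the pair $[(\mathcal{L},\mathcal{L}^\circ)]$ is canonically defined.

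The third and most subtle step is the construction of the $H$-equivariant map $\alpha: \mathcal{F}_\beta \to \mathcal{F}_{\beta^\circ}$. Working triangle-by-triangle in the ideal triangulation $T$, I choose decorated flags $a_p$ lifting $\beta$ at each vertex $p$; the inverse of the birational map of Theorem~\ref{thm:Aisom}, applied to the resulting triple in $\Conf_3(\mathcal{A})$, produces Fock-Goncharov coordinates $A_i$. Declaring $A_i^\circ = B_i A_i$ for $i \in J_m^T$ and $A_i^\circ = A_i$ for $i \in I_m^T - J_m^T$, and again running Theorem~\ref{thm:Aisom} in reverse, I obtain decorated flags $a_p^\circ$ lifting $\beta^\circ$ on the corresponding triangle of $S^\circ$. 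The assignment $a_p \mapsto a_p^\circ$ at the boundary vertices defines $\alpha$; independence from the initial choice of the $a_p$ holds because $\alpha$ depends only on the ratios $B_j$, and $H$-equivariance follows from the equivariance of Fock-Goncharov's construction.

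Finally, to check the identity assertion, composing this rational map with the coordinate map of the preceding proposition returns the original $X_j$ by Theorem~\ref{thm:Xisom} and returns $B_j = A_j^\circ / A_j$ by the very definition of the second construction. The main obstacle, I expect, is making step three fully rigorous: specifically, verifying that the triangle-wise decorated flags $a_p^\circ$ agree on shared edges of adjacent triangles (so that they glue to a global section of $\mathcal{L}^\circ_{\mathcal{A}}$ over the punctured boundary) and checking that the resulting map $\alpha$ is well-defined and $H$-equivariant independent of all auxiliary choices. A secondary point is descent to $\mathbb{Q}$: the construction is carried out at the level of complex points, so to conclude that the result is a rational map of $\mathbb{Q}$-schemes rather than just of complex points, one invokes Lemma~\ref{lem:pointsfield}.
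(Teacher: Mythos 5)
Your broad strategy---produce a framed $PGL_m$-local system from the $X_j$, lift to a twisted $SL_m$-local system, choose decorated flags and set $A_i^\circ=B_iA_i$, then build the data on $S^\circ$---matches the paper, and your invocation of Lemma~\ref{lem:pointsfield} at the end is exactly the paper's route to descent to $\mathbb{Q}$. However, there is a genuine gap in your Step 2. You propose to lift $\mathcal{L}^{PGL}$ and $\mathcal{L}^{PGL,\circ}$ to twisted $SL_m$-local systems $\mathcal{L}$ and $\mathcal{L}^\circ$ by two \emph{independent} applications of Lemma~\ref{lem:lifting}, and then assert that ``the ambiguity in these lifts is exactly the diagonal $\Delta_{SL_m}$-action.'' That is false as stated: two independent lifts are each unique up to $\Delta_{SL_m}$, so the pair $(\mathcal{L},\mathcal{L}^\circ)$ is only defined up to $\Delta_{SL_m}\times\Delta_{SL_m}$, which is strictly coarser than the diagonal quotient in Definition~\ref{def:Dspace}. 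Replacing $\mathcal{L}^\circ$ by $\sigma\cdot\mathcal{L}^\circ$ for a nontrivial $\sigma\in\Delta_{SL_m}$ while fixing $\mathcal{L}$ generally yields a different point of $\mathcal{D}_{PGL_m,S}$, so your construction does not have a well-defined output.

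The paper avoids this by never running Lemma~\ref{lem:lifting} on the $S^\circ$-side. Having chosen a lift $\rho_0$ on $S$, a splitting $s$, and decorated flags $a_p$, it sets $A_i^\circ=B_iA_i$, produces decorated flags $a_p^\circ$ on $\tilde T^\circ$, and then \emph{defines} $\rho_0^\circ(\gamma^\circ)$ as the unique element of $SL_m(\mathbb{C})$ carrying the triple of decorated flags at $t^\circ$ to the triple at $(t')^\circ$. This ties the choice of $\rho_0^\circ$ to that of $\rho_0$: replacing $\rho_0$ by $\sigma\cdot\rho_0$ moves the flags, hence moves $\rho_0^\circ$ by the \emph{same} $\sigma$, and the pair is well-defined precisely modulo the diagonal. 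It also makes the $a_p^\circ$ automatically into sections of $\mathcal{L}^\circ_{\mathcal{A}}$ and produces $\beta^\circ$ and $\alpha$ along the way, so there is nothing to reconcile---whereas in your version there is no reason the independently lifted $\mathcal{L}^\circ$ should admit the $a_p^\circ$ as sections, which is why the gluing ``obstacle'' you flag in step three cannot be cleanly dispatched without reorganizing along the paper's lines (your use of $\pi$ from Theorem~\ref{thm:doubleproperties} in Step 1 is then an unnecessary detour). You also omit the verification, which the paper carries out as its Step 2, that changing the splitting $s:\pi_1(S)\to\bar\pi_1(T'S)$ only moves the pair $(\rho,\rho^\circ)$ by a diagonal $\Delta_{SL_m}$-action, using the $\Hom(C,A)$ clause of Lemma~\ref{lem:splitting}.
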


\begin{proof}
The proof will consist of two steps. In the first step, we will construct an inverse to this map at the level of complex points. The definition of this map will require some choices. In the second part of the proof, we will show that the construction is independent of these choices.

\Needspace*{2\baselineskip}
\begin{step}[1]
Construction of the map
\end{step}

To construct this map, suppose we are given $B_j$,~$X_j\in \mathbb{C}^*$ for~$j\in J$. By Theorem~\ref{thm:Xisom}, we can use the $X_j$ ($j\in J$) to construct a point in $\mathcal{X}_{PGL_m,S}(\mathbb{C})$, that is, a $PGL_m(\mathbb{C})$-local system on~$S$ together with a framing~$\beta$. This local system is represented by some homomorphism $\pi_1(S)\rightarrow PGL_m(\mathbb{C})$.

By Lemma~\ref{lem:lifting}, this homomorphism can be lifted to a homomorphism $\rho_0:\pi_1(S)\rightarrow SL_m(\mathbb{C})$. Recall the central extension 
\[
\xymatrix{ 
1 \ar[r] & \mathbb{Z}/2\mathbb{Z} \ar[r]^-{u} & \bar{\pi}_1(T'S) \ar[r]^-{v} & \pi_1(S) \ar[r] & 1.
}
\]
Since $\pi_1(S)$ is free, we can choose a homomorphism $s:\pi_1(S)\rightarrow\bar{\pi}_1(T'S)$ such that $v\circ s=1$. Once we have chosen this homomorphism, Lemma~\ref{lem:splitting} gives an isomorphism 
\[
\eta:\mathbb{Z}/2\mathbb{Z}\times\pi_1(S)\stackrel{\cong}{\rightarrow}\bar{\pi}_1(T'S).
\]
Consider the homomorphism $\mathbb{Z}/2\mathbb{Z}\times\pi_1(S)\rightarrow SL_m(\mathbb{C})$ defined by the rules $(1,\gamma)\mapsto\rho_0(\gamma)$ and $(\bar{\sigma}_S,1)\mapsto(-1)^{m-1}e$. Abusing notation, we will denote this map also by~$\rho_0$. Then the composition $\rho=\rho_0\circ\eta^{-1}:\bar{\pi}_1(T'S)\rightarrow SL_m(\mathbb{C})$ sends $\bar{\sigma}_S$ to the element $(-1)^{m-1}e$ and hence defines a twisted local system $\mathcal{L}$ on the surface~$S$.

Choose a complete, finite area hyperbolic metric with totally geodesic boundary on~$S'$ so that its universal cover is identified with a subset of the hyperbolic plane. By Proposition~\ref{prop:Xconfig}, the framing $\beta$ is the same thing as an equivariant map $\psi:\mathcal{F}_{\infty}(S)\rightarrow\mathcal{B}(\mathbb{C})$ that associates a flag to each point of $\mathcal{F}_{\infty}(S)$. Here $\mathcal{F}_{\infty}(S)$ is defined as the set of vertices of a lift $\tilde{T}$ of the ideal triangulation~$T$ to the hyperbolic plane.

Let $t$ be any triangle of this lifted triangulation. Then there is a flag associated to each vertex of $t$, and we can choose a decorated flag that projects to this flag under the natural map $\mathcal{A}(\mathbb{C})\rightarrow\mathcal{B}(\mathbb{C})$. This gives a triple of decorated flags and hence an element of $\mathcal{A}_{SL_m,t}(\mathbb{C})$. By Theorem~\ref{thm:Aisom}, there are coordinates $A_i$ on this space, corresponding to vertices $i$ in the $m$-triangulation of $t$. Define a new collection of coordinates by putting 
\[
A_i^\circ=B_iA_i
\]
for each vertex $i$. Here we take $B_i=1$ for $i\in I-J$.

Consider a second copy of the hyperbolic plane with a triangulation $\tilde{T}^\circ$ obtained by reversing the orientation of $\tilde{T}$. There is a triangle $t^\circ$ in this triangulation corresponding to the triangle~$t$, and we can use the numbers $A_i^\circ$ to construct a configuration of three decorated flags associated to the vertices of this triangle. In this way, we get an $H$-equivariant correspondence between decorated flags associated to the vertices of~$\tilde{T}$ and decorated flags associated to the vertices of~$\tilde{T}^\circ$.

Let $\gamma^\circ\in\pi_1(S^\circ)$ and let $\gamma$ be the corresponding element of $\pi_1(S)$. We can view this element $\gamma$ as a deck transformation of the universal cover of $S$, and it maps any triangle~$t$ of~$\tilde{T}$ to a new triangle $t'$. If we choose a decorated flag at each vertex of~$t$ which projects to the ordinary flag at this vertex, then the $SL_m(\mathbb{C})$ transformation $\rho_0(\gamma)$ gives a new triple of decorated flags at the vertices of~$t'$. Consider the corresponding triangles~$t^\circ$ and~$(t')^\circ$ in the triangulation~$\tilde{T^\circ}$. By the construction described above, there are corresponding decorated flags at the vertices of~$t^\circ$ and~$(t')^\circ$. We define $\rho_0^\circ(\gamma^\circ)$ to be the unique element of~$SL_m(\mathbb{C})$ that takes the triple of decorated flags at the vertices of~$t^\circ$ to the triple at the vertices of~$(t')^\circ$.

This defines a homomorphism $\rho_0^\circ:\pi_1(S^\circ)\rightarrow SL_m(\mathbb{C})$. From the isomorphism $\eta$ considered above, we get an isomorphism 
\[
\bar{\pi}_1(T'S^\circ)\cong\mathbb{Z}/2\mathbb{Z}\times\pi_1(S^\circ).
\]
Thus we can construct a representation $\rho^\circ:\bar{\pi}_1(T'S^\circ)\rightarrow SL_m(\mathbb{C})$ as before, and this defines a twisted $SL_m(\mathbb{C})$-local system $\mathcal{L}^\circ$ on the surface~$S^\circ$. The construction also gives an equivariant assignment of a flag to each vertex of $\tilde{T}^\circ$, so by Proposition~\ref{prop:Xconfig}, we have a framing of the associated $PGL_m(\mathbb{C})$-local system. Finally, the correspondence between decorated flags at the vertices of~$\tilde{T}$ and~$\tilde{T}^\circ$ gives the gluing datum~$\alpha$ appearing in~Definition~\ref{def:Dspace}.

Thus we have associated a point in $\mathcal{D}_{PGL_m,S}(\mathbb{C})$ to a collection of elements $B_j$,~$X_j\in \mathbb{C}^*$ for~$j\in J$. By construction, this map composes with the coordinate functions to give the identity whenever this composition is defined. This completes Step~1 of the proof.

\Needspace*{2\baselineskip}
\begin{step}[2]
Independence of choices
\end{step}

In the above construction, we had to choose a lift $\rho_0:\pi_1(S)\rightarrow SL_m(\mathbb{C})$ of a certain map $\pi_1(S)\rightarrow PGL_m(\mathbb{C})$. It follows from Lemma~\ref{lem:lifting} that the resulting pair $(\rho_0,\rho_0^\circ)$ of representations $\pi_1(S)\rightarrow SL_m(\mathbb{C})$ is well defined modulo the diagonal action of $\Delta_{SL_m}$.

In addition to this choice of lift, we had to choose a homomorphism $s:\pi_1(S)\rightarrow\bar{\pi}_1(T'S)$ with the property that $v\circ s=1$. By~Lemma~\ref{lem:splitting}, this choice provides an isomorphism $\eta:\mathbb{Z}/2\mathbb{Z}\times\pi_1(S)\rightarrow\bar{\pi}_1(T'S)$ given by the formula 
\[
\eta(x,\gamma)=u(x)s(\gamma).
\]
If we choose a different homomorphism $s':\pi_1(S)\rightarrow\bar{\pi}_1(T'S)$ satisfying $v\circ s'=1$, then we get a different isomorphism $\eta':\mathbb{Z}/2\mathbb{Z}\times\pi_1(S)\rightarrow\bar{\pi}_1(T'S)$ given by 
\[
\eta'(x,\gamma) = u(x)s'(\gamma) = u(\varphi(\gamma))u(x)s(\gamma)
\]
for some $\varphi\in\Hom(\pi_1(S),\mathbb{Z}/2\mathbb{Z})$. We constructed the twisted local system $\mathcal{L}$ above by describing a homomorphism $\rho:\bar{\pi}_1(T'S)\rightarrow SL_m(\mathbb{C})$. This was defined as a composition $\rho=\rho_0\circ\eta^{-1}$. If we repeat the same construction with the different map~$s'$, we get a different homomorphism $\rho'=\rho_0\circ\eta'^{-1}$. We can write 
\[
\rho=\rho_0\circ\eta^{-1}\circ\eta'\circ\eta'^{-1}.
\]
By the above formulas, we have 
\[
\eta^{-1}\circ\eta'(x,\gamma)=(\varphi(\gamma)x,\gamma).
\]
Let $\tilde{\gamma}\in\bar{\pi}_1(T'S)$ and write $\eta'^{-1}(\tilde{\gamma})=(x,\gamma)$. Then 
\begin{align*}
\rho(\tilde{\gamma}) &= (\rho_0\circ\eta^{-1}\circ\eta'\circ\eta'^{-1})(\tilde{\gamma}) \\
&= (\rho_0\circ\eta^{-1}\circ\eta')(x,\gamma) \\
&= \rho_0(\varphi(\gamma)x,\gamma) \\
&= \rho_0(\varphi(\gamma),1)\rho_0(x,\gamma).
\end{align*}
Finally, we have 
\begin{align*}
\rho_0(x,\gamma) &= (\rho_0\circ\eta'^{-1})(\tilde{\gamma}) \\
&= \rho'(\tilde{\gamma})
\end{align*}
and therefore 
\[
\rho(\tilde{\gamma})=\sigma(\gamma)\rho'(\tilde{\gamma})
\]
where we have defined $\sigma(\gamma)=\rho_0(\varphi(\gamma),1)$. The representation $\rho^\circ$ changes in exactly the same way, so the pair $(\mathcal{L},\mathcal{L}^\circ)$ of twisted local systems determined by these representations is well defined modulo the diagonal action of the group $\Delta_{SL_m}=\Hom(\pi_1(S),Z(SL_m(\mathbb{C})))$. This completes the Step~2 of the proof.

Now the generic part of the moduli space $\mathcal{D}_{PGL_m,S}$ is an affine scheme over~$\mathbb{Q}$, and one can check that the map constructed above is defined by polynomials with coefficients in~$\mathbb{Q}$. Hence, by Lemma~\ref{lem:pointsfield}, we have a birational map $\mathcal{D}_{\mathbf{i}}\dashrightarrow\mathcal{D}_{PGL_m,S}$.
\end{proof}

Theorem~\ref{thm:main} proves that the rational map defined by Fock and Goncharov is a birational equivalence of the symplectic double moduli space and a torus. Since this torus is open in the full cluster variety, we have proved Theorem~\ref{thm:intromain}.

\section*{Acknowledgments}
\addcontentsline{toc}{section}{Acknowledgements}

I thank Alexander~Goncharov for advising me on this project. I thank Dhruv~Ranganathan, Linhui~Shen, and Daping~Weng for helpful discussions. This work was partially supported by NSF grant DMS-1301776.

\end{document}